\documentclass{article}
\usepackage[utf8]{inputenc}
\usepackage{amsthm}
\usepackage{amstext}
\usepackage{amsmath}
\usepackage{amssymb}
\usepackage[all]{xy}
\usepackage{color}
\usepackage{verbatim}
\usepackage{graphicx}
\usepackage{authblk}
\usepackage{appendix}
\usepackage[numbers]{natbib}
\usepackage{mathrsfs}
\usepackage{perpage}
 \MakePerPage{footnote}

\usepackage{hyperref}
\hypersetup{colorlinks=true, citecolor=blue, linkcolor=blue}

\usepackage{footnote}

\usepackage{enumitem}

\newtheorem{definition}{Def\text{}inition}[section]
\newtheorem{theorem}[definition]{Theorem}
\newtheorem{example}[definition]{Example}

\newtheorem{proposition}[definition]{Proposition}

\newtheorem{question}[definition]{Question}
\newtheorem{notation}[definition]{Notation}


\usepackage{tikz-cd}

\begin{document}

\title{ \bf \large Relationships between some selection principles and star selection principles}
\author{ \small JAVIER CASAS-DE LA ROSA}
\date{}
\maketitle

\begin{abstract} 
Motivated by the definition of classical star selection principles, some selection principles were defined in \cite{CRT}, as well as several questions about relationships between the notions defined there with the classical star selection principles were posed. In this paper, we answer the questions posed in \cite{CRT}. In addition, we show that some of the selection principles defined in \cite{CRT} are equivalent by taking collections of complements; also, some other results are provided involving collections of refinements.
\end{abstract}

\emph{Key words.} (strongly) star Menger, (strongly) star-Rothberger, star selection principles, refinements.

\emph{2020 AMS Subject Classification}. Primary 54D20; Secondary 54A25.



\section{Introduction and preliminaries} \label{intro}

One of the fields of mathematics having a plentiful history started in the early 20th century by Borel \cite{BO}, Menger \cite{MEN}, Hurewicz \cite{H}, Rothberger \cite{R} (among others) is the Selection Principles Theory. This theory caught again the attention of many mathematicians after the systematic research on selection principles made by Scheepers \cite{MS1} and currently, this theory has several applications to General Topology, Function spaces, Hyperspaces, etc. Nowadays, many others selection principles have been defined as well as many kind of collections to be considered in such those principles. In particular, one important research line, initiated by Ko\v{c}inac in \cite{K} and \cite{K2}, was arose from this theory by considering the star versions of several classical selection principles. The researching on these star versions have become named as star selection principles theory (see \cite{K_survey}).\\

Nowadays, there are many works on star selection principles that have obtained several interesting results. Recently, the study of star selection principles on the hyperspaces theory started in \cite{xiy} and continued in \cite{CRT2}, \cite{CRT} \cite{DRT} and \cite{CMR}. In particular, in \cite{CRT}, the authors defined some selection principles, which are motivated by the Menger-type and Rothberger-type classical star selection principles, to do some kind of  characterizations on hyperspaces; relationships between these selection principles with their respective star selection principles are also established. The main goal of this work is to answer the questions posed in \cite{CRT}. In addition, some others results involving collections consisting of refinements and complements are established. 

\subsection{Notation and terminology}

Let $X$ be a set. For a subset $U\subseteq X$, a family $\mathcal{U}$ of subsets of $X$ and a collection $\mathscr{U}$ of families of subsets of $X$, we write:
\begin{center}
\begin{tabular}{lcl}
 $U^c$ & = & $X\backslash U$;\\
 $\mathcal{U}^c$ & = & $\{U^c:U\in \mathcal{U}\}$;\\
 $\mathscr{U}^c$ & = & $\{\mathcal{U}^c:\mathcal{U}\in \mathscr{U}\}$.
\end{tabular} 
\end{center}
Also, we denote by $[X]^{<\omega}$ the collection of all finite subsets of $X$. For a subset $A$ of $X$ and a collection $\mathcal{U}$ of subsets of $X$, the star of $A$ with respect to $\mathcal{U}$, denoted by $St(A,\mathcal{U})$, is the set $\bigcup\{U\in\mathcal{U}:U\cap A\neq\emptyset\}$; for $A=\{x\}$ with $x\in X$, we write $St(x,\mathcal{U})$ instead of $St(\{x\},\mathcal{U})$. Throughout this paper, all spaces are assumed to be regular, unless a specific separation axiom is indicated. For notation and terminology, we refer to \cite{E}.\\

We recall some classical well-known selection principles and its star versions. Given an infinite set $X$, let $\mathscr{A}$ and $\mathscr{B}$ be collections of families of subsets of $X$. In \cite{MS1}, Scheepers introduced the following general forms of classical selection principles:\\
\newline
$\mathbf{S_1(\mathscr{A},\mathscr{B})}$: For any sequence $\{\mathcal{A}_n:n\in\omega\}$ of elements of $\mathscr{A}$ there is a sequence $\{B_n:n\in\omega\}$ such that for each $n\in\omega$, $B_n\in \mathcal{A}_n$ and $\{B_n:n\in\omega\}$ is an element of $\mathscr{B}$.\\
\newline
$\mathbf{S_{fin}(\mathscr{A},\mathscr{B})}$: For any sequence $\{\mathcal{A}_n:n\in\omega\}$ of elements of $\mathscr{A}$ there is a sequence $\{\mathcal{B}_n:n\in\omega\}$ such that for each $n\in\omega$, $\mathcal{B}_n$ is a finite subset of $\mathcal{A}_n$ and $\bigcup\{\mathcal{B}_n:n\in\omega\}$ is an element of $\mathscr{B}$.\\

Given a topological space $X$, we denote by $\mathscr{O}$ the collection of all open covers of $X$. Thus, $S_{fin}(\mathscr{O},\mathscr{O})$ defines the classical Menger covering property (see \cite{MEN}) and $S_1(\mathscr{O},\mathscr{O})$ defines the classical Rothberger covering property (see \cite{R}).

The following star selection principles were introduced by Ko\v{c}inac in \cite[Definition 1.1, Definition 1.2]{K}. Henceforth, $\mathcal{K}$ will denote a family of subsets of $X$:\\
\newline
$\mathbf{S^*_1(\mathscr{A},\mathscr{B})}$: For any sequence $\{\mathcal{A}_n:n\in\omega\}$ of elements of $\mathscr{A}$, there is a sequence $\{B_n:n\in\omega\}$ such that $B_n\in\mathcal{A}_n$, $n\in\omega$, and $\{St(B_n,\mathcal{A}_n):n\in\omega\}\in\mathscr{B}$.\\
\newline
$\mathbf{S^*_{fin}(\mathscr{A},\mathscr{B})}$: For any sequence $\{\mathcal{A}_n:n\in\omega\}$ of elements of $\mathscr{A}$, there is a sequence $\{\mathcal{B}_n:n\in\omega\}$ such that $\mathcal{B}_n$ is a finite subset of $\mathcal{A}_n$, $n\in\omega$, and $\bigcup_{n\in\omega}\{St(B,\mathcal{A}_n):B\in\mathcal{B}_n\}\in\mathscr{B}$.\\
\newline
$\mathbf{SS^*_{\mathcal{K}}(\mathscr{A},\mathscr{B})}$: For any sequence $\{\mathcal{A}_n:n\in\omega\}$ of elements of $\mathscr{A}$, there is a sequence $\{K_n:n\in\omega\}$ of elements of $\mathcal{K}$ such that $\{St(K_n,\mathcal{A}_n):n\in\omega\}\in\mathscr{B}$.\\

When $\mathcal{K}$ is the collection of all finite (resp. one-point) subsets of $X$, it is denoted by $\mathbf{SS^*_{fin}(\mathscr{A},\mathscr{B})}$ (resp. $\mathbf{SS^*_1(\mathscr{A},\mathscr{B})}$) instead of $\mathbf{SS^*_{\mathcal{K}}(\mathscr{A},\mathscr{B})}$. Thus, $S^*_{fin}(\mathscr{O},\mathscr{O})$ defines the star-Menger property (SM), $SS^*_{fin}(\mathscr{O},\mathscr{O})$ defines the strongly star-Menger property (SSM), $S^*_1(\mathscr{O},\mathscr{O})$ defines the star-Rothberger property (SR) and $SS^*_1(\mathscr{O},\mathscr{O})$ defines the strongly star-Rothberger property (SSR).

Next diagram shows the relationships among these properties (in the diagram $M$ and $R$ are used to denote the Menger property and the Rothberger property, respectively). We refer the reader to \cite{K_survey} to see the current state of knowledge about these relationships with others.

\begin{figure}[h!]
\[
\begin{tikzcd}
R  \arrow[r] \arrow[d] & M \arrow[d ]\\
SSR \arrow[r] \arrow[d]  & SSM \arrow[d]  \\
SR \arrow[r] & SM
\end{tikzcd}
\]
\label{classicstarVersionsFig}
\end{figure}

It is worth to mention that in the class of paracompact Hausdorff spaces the three Menger-type properties, $M$, $SSM$, $SM$ are equivalent and the same situation holds for the three Rothberger-type properties $R$, $SSR$ and $SR$ (see \cite[Theorem 2.8]{K}). Actually, these equivalences also holds in paraLindel\"of spaces (see \cite[Theorem 2.10]{CGS}). However, this fact is not true in general. In other words, none of the arrows in the previous diagram reverse. In particular, we mention the following interesting example and we refer the reader to {\cite[Example 3.7]{BCKM}} for detail.

\begin{example}\label{Example}
There exists a Tychonoff space which is star-Rothberger (thus, star-Menger) but is not strongly star-Menger (thus, neither strongly star-Rothberger).
\end{example}
\begin{proof}
Let $\kappa$ be an uncountable cardinal and $D(\kappa)=\{d_\alpha:\alpha<\kappa\}$ be a discrete space of cardinality $\kappa$. Let $\alpha(D(\kappa))=D(\kappa)\cup\{\infty\}$ be the one-point compactification of $D(\kappa)$. Let $$X=\left(\alpha(D(\kappa))\times[0, \kappa^+)\right)\cup \left(D(\kappa)\times\{\kappa^+\}\right)$$ be the subspace of the product space $\alpha(D(\kappa))\times[0,\kappa^+]$. Then $X$ is a Tychonoff star-Rothberger space (therefore, star-Menger) that is not strongly star-Menger space (therefore, neither strongly star-Rothberger).
\end{proof}

Now, we list the selection principles introduced in \cite{CRT}, which are motivated by the star versions of the Menger and Rothberger properties. We begin by mentioning the Rothberger-type selection principles.

\begin{definition}[\cite{CRT}]\label{Rothberger type}
Let $X$ be a set and $\mathscr{A}$, $\mathscr{B}$ be collections of families of subsets of $X$. We say that $X$ satisfies the principle:\\
\newline
$\mathbf{DS}_1(\mathscr{A},\mathscr{B})$ if for any sequence $\{\mathcal{D}_n:n\in\omega\}$ of elements in $\mathscr{A}$, there is a sequence $\{D_n:n\in\omega\}$ such that, for each $n\in\omega$, $D_n\in \mathcal{D}_n$ and $\bigcup\{\mathcal{F}_n:n\in\omega\}\in\mathscr{B}$, where $\mathcal{F}_n=\{\bigcap\mathcal{F}:\mathcal{F}\in[\mathcal{D}_n]^{<\omega} \text{ and } D_n^c\cap E^c\neq\emptyset \text{, for each }E\in\mathcal{F}\}$.\\
\newline
$\mathbf{CS}_1(\mathscr{A},\mathscr{B})$ if for any sequence $\{\mathcal{U}_n:n\in\omega\}$ of elements in $\mathscr{A}$, there is a sequence $\{U_n:n\in\omega\}$ such that, for each $n\in\omega$, $U_n\in \mathcal{U}_n$ and $\bigcup\{\mathcal{V}_n:n\in\omega\}\in\mathscr{B}$, where $\mathcal{V}_n=\{\bigcup\mathcal{V}:\mathcal{V}\in[\mathcal{U}_n]^{<\omega} \text{ and } U_n\cap V\neq\emptyset \text{, for each }V\in\mathcal{V}\}$.\\
\newline
$\mathbf{SDS}_1(\mathscr{A},\mathscr{B})$ if for any sequence $\{\mathcal{D}_n:n\in\omega\}$ of elements in $\mathscr{A}$, there is a sequence $\{x_n:n\in\omega\}$ of elements in $X$ such that $\bigcup\{\mathcal{F}_n:n\in\omega\}\in\mathscr{B}$, where $\mathcal{F}_n=\{\bigcap\mathcal{F}:\mathcal{F}\in[\mathcal{D}_n]^{<\omega} \text{ and } x_n\in E^c \text{, for each }E\in\mathcal{F}\}$.\\
\newline
$\mathbf{SCS}_1(\mathscr{A},\mathscr{B})$ if for any sequence $\{\mathcal{U}_n:n\in\omega\}$ of elements in $\mathscr{A}$, there is a sequence $\{x_n:n\in\omega\}$ of elements in $X$ such that $\bigcup\{\mathcal{V}_n:n\in\omega\}\in\mathscr{B}$, where $\mathcal{V}_n=\{\bigcup\mathcal{V}:\mathcal{V}\in[\mathcal{U}_n]^{<\omega} \text{ and } x_n\in V\text{, for each }V\in\mathcal{V}\}$.\\
\end{definition}

On the other hand, the following notions are the Menger-type selection principles which were also introduced in \cite{CRT}.

\begin{definition}[\cite{CRT}]\label{Menger type}
Let $X$ be a set and $\mathscr{A}$, $\mathscr{B}$ be collections of families of subsets of $X$. We say that $X$ satisfies the principle:\\
\newline
$\mathbf{DS}_{fin}(\mathscr{A},\mathscr{B})$ if for any sequence $\{\mathcal{D}_n:n\in\omega\}$ of elements in $\mathscr{A}$, there is a sequence $\{\mathcal{D}_n^f:n\in\omega\}$ such that, for each $n\in\omega$, $\mathcal{D}_n^f=\{D_n^1,\dots, D_n^{k_n}\}\subseteq \mathcal{D}_n$ and $\bigcup\{\mathcal{F}_n^i:n\in\omega \text{ and } i\in\{1,\dots, k_n\}\}\in\mathscr{B}$, where $\mathcal{F}_n^i=\{\bigcap\mathcal{F}:\mathcal{F}\in[\mathcal{D}_n]^{<\omega} \text{ and } (D_n^i)^c\cap E^c\neq\emptyset \text{, for each }E\in\mathcal{F}\}$, for any $n\in\omega$ and $i\in\{1,\ldots, k_n\}$.\\
\newline
$\mathbf{CS}_{fin}(\mathscr{A},\mathscr{B})$ if for any sequence $\{\mathcal{U}_n:n\in\omega\}$ of elements in $\mathscr{A}$, there is a sequence $\{\mathcal{U}_n^f:n\in\omega\}$ such that, for each $n\in\omega$, $\mathcal{U}_n^f=\{U_n^1,\dots, U_n^{k_n}\}\subseteq \mathcal{U}_n$ and $\bigcup\{\mathcal{V}_n^i:n\in\omega \text{ and }i\in\{1,\ldots, k_n\}\}\in\mathscr{B}$, where $\mathcal{V}_n^i=\{\bigcup\mathcal{V}:\mathcal{V}\in[\mathcal{U}_n]^{<\omega} \text{ and } U_n^i\cap V\neq\emptyset \text{, for each }V\in\mathcal{V}\}$, for any $n\in\omega$ and $i\in\{1,\ldots, k_n\}$.\\
\newline
$\mathbf{SDS}_{fin}(\mathscr{A},\mathscr{B})$ if for any sequence $\{\mathcal{D}_n:n\in\omega\}$ of elements in $\mathscr{A}$, there is a sequence $\{D_n^f:n\in\omega\}$ such that, for each $n\in\omega$, $D_n^f=\{x_n^1,\ldots, x_n^{k_n}\}\subseteq X$ and $\bigcup\{\mathcal{F}_n^i:n\in\omega \text{ and }i\in\{1,\dots, k_n\}\}\in\mathscr{B}$, where $\mathcal{F}_n^i=\{\bigcap\mathcal{F}:\mathcal{F}\in[\mathcal{D}_n]^{<\omega} \text{ and } x_n^i\in E^c \text{, for each }E\in\mathcal{F}\}$, for any $n\in\omega$ and $i\in\{1,\dots, k_n\}$.\\
\newline
$\mathbf{SCS}_{fin}(\mathscr{A},\mathscr{B})$ if for any sequence $\{\mathcal{U}_n:n\in\omega\}$ of elements in $\mathscr{A}$, there is a sequence $\{U_n^f:n\in\omega\}$ such that, for each $n\in\omega$, $U_n^f=\{x_n^1,\dots, x_n^{k_n}\}\subseteq X$ and $\bigcup\{\mathcal{V}_n^i:n\in\omega \text{ and }i\in\{1,\dots, k_n\}\}\in\mathscr{B}$, where $\mathcal{V}_n^i=\{\bigcup\mathcal{V}:\mathcal{V}\in[\mathcal{U}_n]^{<\omega} \text{ and } x_n^i\in V\text{, for each }V\in\mathcal{V}\}$, for any $n\in\omega$ and $i\in\{1,\dots, k_n\}$.\\
\end{definition}

\section{(Non) Equivalences of some selection principles}

Several questions that involve some selection principles from Definitions \ref{Rothberger type} and \ref{Menger type} with the Rothberger-type and Menger-type star selection principles were asked in \cite{CRT}. The main goal of this section is to answer these questions. We start by answering affirmatively the following question that basically ask about the equivalence of the Rothberger-type selection principle $\mathbf{CS}_1(\mathscr{O},\mathscr{O})$ and the star-Rothberger property.

\begin{question}[{\cite[Question 4.8 (2)]{CRT}}]\label{Question 4.8(2)}
Let $(X, \tau)$ be a topological space and let $\mathscr{O}$ be the collection of open covers of $X$. Does the principles $\mathbf{CS}_1(\mathscr{O},\mathscr{O})$ and $\mathbf{S}^*_1(\mathscr{O},\mathscr{O})$ are equivalent?
\end{question}

In \cite[Proposition 4.6]{CRT}, the authors showed one direction of Question \ref{Question 4.8(2)}:

\begin{proposition}[\cite{CRT}]\label{CS1impliesS*1}
Let $X$ be a topological space. If $X$ satisfies $\mathbf{CS}_1(\mathscr{O},\mathscr{O})$, then $X$ satisfies $\mathbf{S}^*_1(\mathscr{O},\mathscr{O})$.
\end{proposition}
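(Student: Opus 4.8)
The plan is to show that $\mathbf{CS}_1(\mathscr{O},\mathscr{O})$ implies $\mathbf{S}^*_1(\mathscr{O},\mathscr{O})$ by unpacking the definition of $\mathbf{CS}_1$ and matching it with the selection required in $\mathbf{S}^*_1$. Let $\{\mathcal{U}_n:n\in\omega\}$ be a sequence of open covers of $X$. Apply $\mathbf{CS}_1(\mathscr{O},\mathscr{O})$ to obtain, for each $n\in\omega$, a set $U_n\in\mathcal{U}_n$ such that $\bigcup\{\mathcal{V}_n:n\in\omega\}$ is an open cover of $X$, where $\mathcal{V}_n=\{\bigcup\mathcal{V}:\mathcal{V}\in[\mathcal{U}_n]^{<\omega}\text{ and }U_n\cap V\neq\emptyset\text{ for each }V\in\mathcal{V}\}$. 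The candidate witnesses for $\mathbf{S}^*_1$ will be exactly these same sets $U_n$; one must then verify that $\{St(U_n,\mathcal{U}_n):n\in\omega\}$ is an open cover of $X$.

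The key observation is the relationship between the elements of $\mathcal{V}_n$ and the star $St(U_n,\mathcal{U}_n)$. First I would note that every member of $\mathcal{V}_n$ is a subset of $St(U_n,\mathcal{U}_n)$: if $\mathcal{V}\in[\mathcal{U}_n]^{<\omega}$ with $U_n\cap V\neq\emptyset$ for each $V\in\mathcal{V}$, then each such $V$ satisfies $V\subseteq St(U_n,\mathcal{U}_n)$ by definition of the star, hence $\bigcup\mathcal{V}\subseteq St(U_n,\mathcal{U}_n)$. Consequently, if a point $x\in X$ is covered by some member of $\bigcup\{\mathcal{V}_n:n\in\omega\}$, say $x\in\bigcup\mathcal{V}$ for some $\mathcal{V}$ witnessing membership in $\mathcal{V}_n$, then $x\in St(U_n,\mathcal{U}_n)$. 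Therefore $\bigcup\{\mathcal{V}_n:n\in\omega\}$ being a cover of $X$ forces $\{St(U_n,\mathcal{U}_n):n\in\omega\}$ to be a cover of $X$ as well. Since each $St(U_n,\mathcal{U}_n)$ is open (a union of open sets), this family lies in $\mathscr{O}$, which is precisely what $\mathbf{S}^*_1(\mathscr{O},\mathscr{O})$ demands.

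A small point worth checking carefully is that the family $\mathcal{V}_n$ is nonempty and genuinely "reaches" $St(U_n,\mathcal{U}_n)$, i.e. that the passage from $\mathbf{CS}_1$ to $\mathbf{S}^*_1$ does not lose covering information in the other direction; but for the implication we only need the containment $\bigcup\mathcal{V}\subseteq St(U_n,\mathcal{U}_n)$ established above, so no reverse inclusion is required. I do not anticipate a serious obstacle here: the argument is essentially a definitional comparison, and the only mildly delicate step is recognizing that singleton families $\mathcal{V}=\{V\}$ with $U_n\cap V\neq\emptyset$ already exhibit every set $V$ meeting $U_n$ as an element-union inside $\mathcal{V}_n$, so that $\bigcup\mathcal{V}_n=St(U_n,\mathcal{U}_n)$ in fact, making the two covering conditions equivalent for this fixed choice of $\{U_n\}$. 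This makes the implication transparent and the proof short.
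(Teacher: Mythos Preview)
The paper does not actually give a proof of this proposition; it is quoted from \cite{CRT} (their Proposition~4.6) without reproducing the argument. Your proof is correct and is essentially the natural one: the containment $\bigcup\mathcal{V}\subseteq St(U_n,\mathcal{U}_n)$ for each admissible $\mathcal{V}\in[\mathcal{U}_n]^{<\omega}$ is exactly what is needed, and the identity $\bigcup\mathcal{V}_n=St(U_n,\mathcal{U}_n)$ that you observe at the end is precisely the Claim the paper establishes and uses in its proof of the converse implication (Proposition~\ref{S*1impliesCS1}).
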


The converse is also true.

\begin{proposition}\label{S*1impliesCS1}
Let $X$ be a topological space. If $X$ satisfies $\mathbf{S}^*_1(\mathscr{O},\mathscr{O})$, then $X$ satisfies $\mathbf{CS}_1(\mathscr{O},\mathscr{O})$.
\end{proposition}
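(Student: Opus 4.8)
The plan is to run $\mathbf{S}^*_1(\mathscr{O},\mathscr{O})$ on the very sequence of covers we are handed, and then check that its output is, verbatim, a witness for $\mathbf{CS}_1(\mathscr{O},\mathscr{O})$. The structural remark driving this is that the auxiliary family $\mathcal{V}_n$ attached to a choice $U_n\in\mathcal{U}_n$ in Definition~\ref{Rothberger type} is allowed to be built from singleton subfamilies $\mathcal{V}=\{V\}$, so that $\mathcal{V}_n$ contains \emph{every} $V\in\mathcal{U}_n$ with $U_n\cap V\neq\emptyset$; consequently $\bigcup\mathcal{V}_n\supseteq St(U_n,\mathcal{U}_n)$, and a star-Rothberger selection can simply be recycled.

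In detail, I would start with an arbitrary sequence $\{\mathcal{U}_n:n\in\omega\}$ of open covers of $X$. Applying $\mathbf{S}^*_1(\mathscr{O},\mathscr{O})$ to it produces sets $B_n\in\mathcal{U}_n$ ($n\in\omega$) with $\{St(B_n,\mathcal{U}_n):n\in\omega\}\in\mathscr{O}$. I then set $U_n:=B_n$ for every $n$ and form $\mathcal{V}_n=\{\bigcup\mathcal{V}:\mathcal{V}\in[\mathcal{U}_n]^{<\omega}\text{ and }U_n\cap V\neq\emptyset\text{ for each }V\in\mathcal{V}\}$ as in the statement of $\mathbf{CS}_1$. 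It remains to verify that $\bigcup\{\mathcal{V}_n:n\in\omega\}$ is an open cover of $X$, which is exactly the conclusion required.

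This verification is short. Each element of each $\mathcal{V}_n$ is a finite union of members of $\mathcal{U}_n$, hence open. For the covering property, fix $x\in X$; since $\{St(B_n,\mathcal{U}_n):n\in\omega\}$ covers $X$, choose $n$ with $x\in St(B_n,\mathcal{U}_n)$, i.e.\ there is $V\in\mathcal{U}_n$ with $x\in V$ and $V\cap B_n\neq\emptyset$. As $\{V\}\in[\mathcal{U}_n]^{<\omega}$ and $U_n\cap V=B_n\cap V\neq\emptyset$, we get $V=\bigcup\{V\}\in\mathcal{V}_n$, so $x\in V\in\bigcup\{\mathcal{V}_m:m\in\omega\}$. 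Hence $\bigcup\{\mathcal{V}_n:n\in\omega\}\in\mathscr{O}$. I do not foresee any genuine obstacle; the only point to be careful about is not to overlook that singleton finite subfamilies are permitted in the definition of $\mathcal{V}_n$, which is precisely what allows the star-Rothberger selection $(B_n)$ to be re-used unchanged. Combined with Proposition~\ref{CS1impliesS*1}, this settles Question~\ref{Question 4.8(2)} affirmatively.
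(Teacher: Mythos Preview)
Your proof is correct and follows essentially the same approach as the paper's: apply $\mathbf{S}^*_1(\mathscr{O},\mathscr{O})$ directly to the given sequence and use the resulting $U_n$'s as the $\mathbf{CS}_1$ selection, with the key observation that singleton subfamilies $\{V\}$ with $V\cap U_n\neq\emptyset$ already place every element of $St(U_n,\mathcal{U}_n)$ inside $\bigcup\mathcal{V}_n$. The only cosmetic difference is that the paper records the full equality $\bigcup\mathcal{V}_n=St(U_n,\mathcal{U}_n)$ as a separate claim, whereas you (efficiently) verify only the inclusion that is actually needed.
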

\begin{proof}
Let $\{\mathcal{U}_n:n\in\omega\}$ be a sequence of open covers of $X$. By hypothesis, we have that there exists a sequence  $\{U_n:n\in\omega\}$ such that $U_n\in\mathcal{U}_n$ and the collection $\{St(U_n, \mathcal{U}_n):n\in\omega\}$ is an open cover of $X$. For each $n\in\omega$, let $\mathcal{W}_n=\{U\in\mathcal{U}_n:U\cap U_n\neq\emptyset\}$ and we define, for each $n\in\omega$, $\mathcal{V}_n=\{\bigcup\mathcal{V}:\mathcal{V}\in[\mathcal{W}_n]^{<\omega}\}$. It is clear that, for each $n\in\omega$, $\mathcal{V}_n=\{\bigcup\mathcal{V}:\mathcal{V}\in[\mathcal{U}_n]^{<\omega} \text{ and } U_n\cap V\neq\emptyset \text{, for each }V\in\mathcal{V}\}$.

\emph{Claim.} For each $n\in\omega$, $\bigcup\mathcal{V}_n=St(U_n, \mathcal{U}_n)$.

Fix $n\in\omega$. If $x\in \bigcup\mathcal{V}_n$, then there exists $\mathcal{V}_0\in[\mathcal{U}_n]^{<\omega}$ so that $U_n\cap V\neq\emptyset$, for each $V\in\mathcal{V}_0$, and $x\in \bigcup\mathcal{V}_0$. Then, there is $U_0\in\mathcal{V}_0$ such that $x\in U_0$. Therefore, $x\in St(U_n, \mathcal{U}_n)$. On the other hand, if $x\in St(U_n, \mathcal{U}_n)$, then there is $U_0\in\mathcal{U}_n$ such that $U_0\cap U_n\neq\emptyset$ and $x\in U_0$. Put $\mathcal{V}=\{U_0\}$. Then, $\mathcal{V}\in[\mathcal{U}_n]^{<\omega}$ and is so that for each $V\in\mathcal{V}$, $V\cap U_n\neq\emptyset$. Thus, we have that $x\in\bigcup\mathcal{V}$ and $\bigcup\mathcal{V}\in \mathcal{V}_n$. Hence, $x\in\bigcup\mathcal{V}_n$.

Finally, let us show that $\bigcup\{\mathcal{V}_n:n\in\omega\}$ is an open cover of $X$. Let $x\in X$. Since the collection $\{St(U_n, \mathcal{U}_n):n\in\omega\}$ is an open cover of $X$, there exists $n_0\in\omega$ such that $x\in St(U_{n_0}, \mathcal{U}_{n_0})$. By the Claim, we have that $x\in\bigcup\mathcal{V}_{n_0}$. Thus, there exists $\bigcup\mathcal{V}_0\in\mathcal{V}_{n_0}$ such that $x\in\bigcup\mathcal{V}_0$. Thus, $\bigcup\{\mathcal{V}_n:n\in\omega\}$ is an open cover of $X$. Hence, $X$ satisfies $\mathbf{CS}_1(\mathscr{O},\mathscr{O})$.
\end{proof}

From Propositions \ref{CS1impliesS*1} and \ref{S*1impliesCS1}, we obtain the following theorem.

\begin{theorem}\label{Rothberger equivalence}
Let $X$ be a topological space. Then $X$ is star-Rothberger if and only if $X$ satisfies the principle $\mathbf{CS}_1(\mathscr{O},\mathscr{O})$.
\end{theorem}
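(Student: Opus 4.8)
The plan is to observe that Theorem~\ref{Rothberger equivalence} is an immediate consequence of the two propositions already available, so essentially nothing new is required beyond unwinding the definitions. Recall that, by the notation introduced after Definition~\ref{Rothberger type}, the star-Rothberger property is by definition the principle $\mathbf{S}^*_1(\mathscr{O},\mathscr{O})$. Hence the statement ``$X$ is star-Rothberger if and only if $X$ satisfies $\mathbf{CS}_1(\mathscr{O},\mathscr{O})$'' is literally the conjunction of the implications $\mathbf{S}^*_1(\mathscr{O},\mathscr{O})\Rightarrow\mathbf{CS}_1(\mathscr{O},\mathscr{O})$ and $\mathbf{CS}_1(\mathscr{O},\mathscr{O})\Rightarrow\mathbf{S}^*_1(\mathscr{O},\mathscr{O})$.

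First I would invoke Proposition~\ref{CS1impliesS*1} for the forward direction: if $X$ satisfies $\mathbf{CS}_1(\mathscr{O},\mathscr{O})$, then $X$ satisfies $\mathbf{S}^*_1(\mathscr{O},\mathscr{O})$, i.e.\ $X$ is star-Rothberger. Then I would invoke Proposition~\ref{S*1impliesCS1} for the converse: if $X$ is star-Rothberger, that is, $X$ satisfies $\mathbf{S}^*_1(\mathscr{O},\mathscr{O})$, then $X$ satisfies $\mathbf{CS}_1(\mathscr{O},\mathscr{O})$. Combining the two yields the asserted equivalence, which completes the proof.

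There is really no obstacle here: the substantive content lies entirely in Proposition~\ref{S*1impliesCS1} (the new direction proved in the excerpt) and in Proposition~\ref{CS1impliesS*1} (already established in \cite{CRT}), and Theorem~\ref{Rothberger equivalence} is just the packaging of these into a single ``if and only if'' statement. The only point worth a sentence of care is making explicit that ``star-Rothberger'' is synonymous with $\mathbf{S}^*_1(\mathscr{O},\mathscr{O})$, so that the reader sees the theorem is a restatement rather than a separate argument. Accordingly, the proof will be a two-line citation of Propositions~\ref{CS1impliesS*1} and~\ref{S*1impliesCS1}.
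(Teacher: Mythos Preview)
Your proposal is correct and matches the paper's approach exactly: the paper does not give a separate proof but simply states that the theorem follows from Propositions~\ref{CS1impliesS*1} and~\ref{S*1impliesCS1}. The only cosmetic point is that what you call the ``forward direction'' is actually the ``only if'' part of the biconditional as stated, but both implications are covered and the argument is complete.
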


One more question, from \cite{CRT}, that involves the principle $\mathbf{CS}_1(\mathscr{O},\mathscr{O})$ is the following which ask about the relationship with the strongly star-Rothberger property.

\begin{question}[{\cite[Question 4.8 (1)]{CRT}}]\label{Question 4.8(1)}
Let $(X, \tau)$ be a topological space and let $\mathscr{O}$ be the collection of open covers of $X$. Does the principle $\mathbf{CS}_1(\mathscr{O},\mathscr{O})$ implies $\mathbf{SS}^*_1(\mathscr{O},\mathscr{O})$?
\end{question}

Observe that, by Theorem \ref{Rothberger equivalence}, Question \ref{Question 4.8(1)} asks if the star selection principle $\mathbf{S}^*_1(\mathscr{O},\mathscr{O})$ implies the star selection principle $\mathbf{SS}^*_1(\mathscr{O},\mathscr{O})$. In other words, Question \ref{Question 4.8(1)} asks if the star-Rothberger property implies the strongly star-Rothberger property. By Example \ref{Example}, we know that there is a star-Rothberger space that is not strongly star-Rothberger. Thus, Question \ref{Question 4.8(1)} has a negative answer.\\

Another question, also posed in \cite{CRT}, which is similar to the previous one but asked for arbitrary collections of families of subsets is the following.

\begin{question}[{\cite[Question 4.5]{CRT}}]\label{Question 4.5}
Let $X$ be a set and $\mathscr{A}$ and $\mathscr{B}$ collections of families of subsets of $X$. Does the principle $\mathbf{CS}_1(\mathscr{A},\mathscr{B})$ implies the principle $\mathbf{SS}_1^*(\mathscr{A},\mathscr{B})$ (or $\mathbf{S}_1^*(\mathscr{A},\mathscr{B})$)?
\end{question}

In \cite{CRT}, the authors showed that if one consider a topological space $X$ and take $\mathscr{A}$ and $\mathscr{B}$ to be the collection $\mathscr{O}$ of all open covers of $X$, then the principle $\mathbf{CS}_1(\mathscr{O},\mathscr{O})$ implies the principle $\mathbf{S}_1^*(\mathscr{O},\mathscr{O})$ (see Theorem 4.6 in \cite{CRT}). However, this question has a negative answer for the general case as the following example shows.\\

\begin{example}\label{exampleRothbergercase}
There exist a set $X$ and collections $\mathscr{A}$ and $\mathscr{B}$ of families of subsets of $X$ such that $X$ satisfies the principle $\mathbf{CS}_1(\mathscr{A},\mathscr{B})$ but neither $X$ satisfies the principle $\mathbf{SS}_1^*(\mathscr{A},\mathscr{B})$ nor $\mathbf{S}_1^*(\mathscr{A},\mathscr{B})$.
\end{example}

\begin{proof}
Let $X=\omega$ and we set $\mathscr{A} = \mathscr{B} = \mathcal{P}([X]^{<\omega}\setminus\{\emptyset\})$. Observe that if $\mathcal{W}$ is any finite subset of $[X]^{<\omega}\setminus\{\emptyset\}$, then $\bigcup\mathcal{W}\in [X]^{<\omega}\setminus\{\emptyset\}$. From this fact, it is clear that for every sequence $\{\mathcal{U}_n:n\in\omega\}$ of elements in $\mathscr{A}$, there exists a sequence $\{U_n:n\in\omega\}$ such that $U_n\in\mathcal{U}_n$ ($n\in\omega$) and $\bigcup\{\mathcal{V}_n:n\in\omega\}\in \mathscr{B}$, where $\mathcal{V}_n=\{\bigcup\mathcal{V}:\mathcal{V}\in[\mathcal{U}_n]^{<\omega} \text{ and } U_n\cap V\neq\emptyset \text{, for each }V\in\mathcal{V}\}$. In fact, something stronger holds:\\
For every sequence $\{\mathcal{U}_n:n\in\omega\}$ of elements in $\mathscr{A}$ and for every sequence $\{U_n:n\in\omega\}$ with $U_n\in\mathcal{U}_n$ ($n\in\omega$), $\bigcup\{\mathcal{V}_n:n\in\omega\}\in \mathscr{B}$, where $\mathcal{V}_n=\{\bigcup\mathcal{V}:\mathcal{V}\in[\mathcal{U}_n]^{<\omega} \text{ and } U_n\cap V\neq\emptyset \text{, for each }V\in\mathcal{V}\}$.

Therefore, $X$ satisfies $\mathbf{CS}_1(\mathscr{A},\mathscr{B})$.\\

Now, let $\mathcal{U}$ be the collection of all initial segments in $\omega$. In other words, let $\mathcal{U}=\{\{0,\ldots, k\}: k\in\omega\}$ and, for each $n\in \omega$, we put $\mathcal{U}_n=\mathcal{U}$. Thus, the sequence $\{\mathcal{U}_n:n\in\omega\}$ consists of elements in $\mathscr{A}$.\\

\emph{Claim 1.} For any sequence $\{U_n:n\in\omega\}$ with $U_n\in\mathcal{U}_n$ ($n\in\omega$), $\{St(U_n,\mathcal{U}_n):n\in\omega\}\notin\mathscr{B}$.

Let $\{U_n:n\in\omega\}$ be any sequence such that, for each $n\in\omega$, $U_n\in\mathcal{U}_n$. Observe that if $U$ is any element of $[X]^{<\omega}\setminus\{\emptyset\}$, then $St(U,\mathcal{U})=\omega$. From this fact, it is follows that, for each $n\in\omega$, $St(U_n,\mathcal{U}_n)=\omega$. Thus, for each $n\in\omega$, $St(U_n,\mathcal{U}_n)\notin [X]^{<\omega}\setminus\{\emptyset\}$, and therefore, $\{St(U_n,\mathcal{U}_n):n\in\omega\}\notin\mathscr{B}$.

Hence, $X$ does not satisfy $\mathbf{S}_1^*(\mathscr{A},\mathscr{B})$.\\

\emph{Claim 2.} For any sequence $\{x_n:n\in\omega\}$ of points in $X$, $\{St(x_n,\mathcal{U}_n):n\in\omega\}\notin\mathscr{B}$.

Let $\{x_n:n\in\omega\}$ be any sequence of points in $X$. Using same argument as in Claim 1, for each $n\in\omega$, $St(x_n,\mathcal{U}_n)=\omega$. Thus, $\{St(x_n,\mathcal{U}_n):n\in\omega\}\notin\mathscr{B}$.

Hence, $X$ does not satisfy $\mathbf{SS}_1^*(\mathscr{A},\mathscr{B})$.
\end{proof}

\vspace{.5cm}


The following analogous question about the equivalence of the Menger-type selection principle $\mathbf{CS}_{fin}(\mathscr{O},\mathscr{O})$ and the star-Menger property was also asked in \cite{CRT}. Here, we also give a positive answer to this question and it is obtained from Proposition \ref{S*finimpliesCSfin} below.

\begin{question}[{\cite[Question 4.14 (2)]{CRT}}]\label{Question 4.14(2)}
Let $(X, \tau)$ be a topological space and let $\mathscr{O}$ be the collection of open covers of $X$. Does the principles $\mathbf{CS}_{fin}(\mathscr{O},\mathscr{O})$ and $\mathbf{S}^*_{fin}(\mathscr{O},\mathscr{O})$ are equivalent?
\end{question}

In \cite[Proposition 4.12]{CRT}, the authors showed one direction of Question \ref{Question 4.14(2)}:

\begin{proposition}[\cite{CRT}]\label{CSfinimpliesS*fin}
Let $X$ be a topological space. If $X$ satisfies $\mathbf{CS}_{fin}(\mathscr{O},\mathscr{O})$, then $X$ satisfies $\mathbf{S}^*_{fin}(\mathscr{O},\mathscr{O})$.
\end{proposition}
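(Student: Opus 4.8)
The plan is to adapt the argument of Proposition~\ref{S*1impliesCS1}, applying the same Claim to each of the finitely many selected sets. First I would fix a sequence $\{\mathcal{U}_n:n\in\omega\}$ of open covers of $X$ and apply $\mathbf{CS}_{fin}(\mathscr{O},\mathscr{O})$ to obtain, for each $n\in\omega$, a finite set $\mathcal{U}_n^f=\{U_n^1,\dots,U_n^{k_n}\}\subseteq\mathcal{U}_n$ such that $\bigcup\{\mathcal{V}_n^i:n\in\omega,\ i\in\{1,\dots,k_n\}\}$ is an open cover of $X$, where $\mathcal{V}_n^i=\{\bigcup\mathcal{V}:\mathcal{V}\in[\mathcal{U}_n]^{<\omega}\text{ and }U_n^i\cap V\neq\emptyset\text{ for each }V\in\mathcal{V}\}$. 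I will use this very same finite set as the witness required by $\mathbf{S}^*_{fin}$: put $\mathcal{B}_n=\mathcal{U}_n^f$, which is a finite subset of $\mathcal{U}_n$.

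The key step is exactly the Claim in the proof of Proposition~\ref{S*1impliesCS1}, applied to each selected set separately: for every $n\in\omega$ and every $i\in\{1,\dots,k_n\}$ one has $\bigcup\mathcal{V}_n^i=St(U_n^i,\mathcal{U}_n)$. The same two short verifications work: the inclusion $\bigcup\mathcal{V}_n^i\subseteq St(U_n^i,\mathcal{U}_n)$ because any point of $\bigcup\mathcal{V}_n^i$ lies in some member of a finite subfamily all of whose members meet $U_n^i$, and the reverse inclusion by taking the singleton subfamily $\{W\}$ for a $W\in\mathcal{U}_n$ that witnesses membership in $St(U_n^i,\mathcal{U}_n)$. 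In particular $\{St(B,\mathcal{U}_n):B\in\mathcal{B}_n\}=\{\bigcup\mathcal{V}_n^i:i\in\{1,\dots,k_n\}\}$ for each $n$, and each of these stars is open.

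It then remains only to check that $\bigcup_{n\in\omega}\{St(B,\mathcal{U}_n):B\in\mathcal{B}_n\}$ covers $X$. Given $x\in X$, since $\bigcup\{\mathcal{V}_n^i:n\in\omega,\ i\in\{1,\dots,k_n\}\}$ is an open cover, $x$ lies in some member of some $\mathcal{V}_{n_0}^{i_0}$, hence in $\bigcup\mathcal{V}_{n_0}^{i_0}=St(U_{n_0}^{i_0},\mathcal{U}_{n_0})$, which is one of the sets $St(B,\mathcal{U}_{n_0})$ with $B\in\mathcal{B}_{n_0}$. Therefore $\bigcup_{n\in\omega}\{St(B,\mathcal{U}_n):B\in\mathcal{B}_n\}$ is an open cover of $X$, so $X$ satisfies $\mathbf{S}^*_{fin}(\mathscr{O},\mathscr{O})$.

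I do not anticipate any genuine obstacle: this is the easier of the two implications, since $\mathbf{CS}_{fin}$ already delivers a nonempty finite selection $\{U_n^1,\dots,U_n^{k_n}\}$, so no padding is needed — unlike the reverse direction, where an application of $\mathbf{S}^*_{fin}$ could a priori return an empty finite subfamily. The only point demanding a little care is keeping straight the difference between the collection of families $\{\mathcal{V}_n^i:n\in\omega,\ i\in\{1,\dots,k_n\}\}$ and the single family obtained by taking their union, which is the object actually required to be an open cover; once that is settled, the proof is a short transcription of the $\mathbf{S}^*_1$-versus-$\mathbf{CS}_1$ argument.
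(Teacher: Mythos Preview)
Your argument is correct. Note, however, that the paper does not supply its own proof of this proposition: it is quoted verbatim from \cite[Proposition~4.12]{CRT}, so there is no in-paper proof to compare against. That said, your approach is exactly the natural one and is the mirror image of the paper's proof of the converse (Proposition~\ref{S*finimpliesCSfin}): both hinge on the identity $\bigcup\mathcal{V}_n^i = St(U_n^i,\mathcal{U}_n)$, which the paper states and proves as its Claim in that proposition. Your side remark about ``padding'' for possibly empty selections is harmless but unnecessary here---an empty $\mathcal{B}_n$ simply contributes no stars at stage $n$, and the covering conclusion is unaffected.
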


The converse is also true.

\begin{proposition}\label{S*finimpliesCSfin}
Let $X$ be a topological space. If $X$ satisfies $\mathbf{S}^*_{fin}(\mathscr{O},\mathscr{O})$, then $X$ satisfies $\mathbf{CS}_{fin}(\mathscr{O},\mathscr{O})$.
\end{proposition}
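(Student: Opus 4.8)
The plan is to mimic the proof of Proposition \ref{S*1impliesCS1}, carrying along one extra finite index. First I would fix a sequence $\{\mathcal{U}_n:n\in\omega\}$ of open covers of $X$ and apply the hypothesis $\mathbf{S}^*_{fin}(\mathscr{O},\mathscr{O})$ to obtain, for each $n\in\omega$, a finite set $\mathcal{U}_n^f=\{U_n^1,\dots,U_n^{k_n}\}\subseteq\mathcal{U}_n$ such that $\bigcup_{n\in\omega}\{St(U_n^i,\mathcal{U}_n):i\in\{1,\dots,k_n\}\}$ is an open cover of $X$. This sequence $\{\mathcal{U}_n^f:n\in\omega\}$ will be the finite selection required by $\mathbf{CS}_{fin}(\mathscr{O},\mathscr{O})$.

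Next, for each $n\in\omega$ and each $i\in\{1,\dots,k_n\}$, I would set $\mathcal{W}_n^i=\{U\in\mathcal{U}_n:U\cap U_n^i\neq\emptyset\}$ and define $\mathcal{V}_n^i=\{\bigcup\mathcal{V}:\mathcal{V}\in[\mathcal{W}_n^i]^{<\omega}\}$; one checks immediately that this coincides with the family $\mathcal{V}_n^i=\{\bigcup\mathcal{V}:\mathcal{V}\in[\mathcal{U}_n]^{<\omega}\text{ and }U_n^i\cap V\neq\emptyset\text{ for each }V\in\mathcal{V}\}$ appearing in Definition \ref{Menger type}. The key step is the claim that $\bigcup\mathcal{V}_n^i=St(U_n^i,\mathcal{U}_n)$ for every $n$ and $i$; the proof is verbatim the argument in the Claim inside Proposition \ref{S*1impliesCS1}, with $U_n$ replaced by $U_n^i$: the inclusion $\subseteq$ uses that every member of $\mathcal{V}_n^i$ is a union of sets each meeting $U_n^i$, hence is contained in $St(U_n^i,\mathcal{U}_n)$, while the inclusion $\supseteq$ uses the singleton $\mathcal{V}=\{U_0\}$ for a witnessing $U_0\in\mathcal{U}_n$ with $x\in U_0$ and $U_0\cap U_n^i\neq\emptyset$.

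Finally I would verify the covering condition. Given $x\in X$, by the choice of the $\mathcal{U}_n^f$ there are $n\in\omega$ and $i\in\{1,\dots,k_n\}$ with $x\in St(U_n^i,\mathcal{U}_n)$, and by the claim $x\in\bigcup\mathcal{V}_n^i$, so some member of $\mathcal{V}_n^i$ contains $x$. Hence $\bigcup\{\mathcal{V}_n^i:n\in\omega\text{ and }i\in\{1,\dots,k_n\}\}$ is an open cover of $X$, which is exactly what $\mathbf{CS}_{fin}(\mathscr{O},\mathscr{O})$ demands, and the proof is complete. I do not expect a genuine obstacle: the only thing to be careful about is the double-index bookkeeping, i.e.\ making sure that the re-indexed family $\{\mathcal{V}_n^i:n\in\omega,\ i\in\{1,\dots,k_n\}\}$ built here is literally the object named in Definition \ref{Menger type}, and that each $\mathcal{V}_n^i$ is nonempty (it contains $U_n^i$ itself since $U_n^i\in\mathcal{W}_n^i$), so that no degenerate cases arise.
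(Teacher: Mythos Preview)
Your proposal is correct and follows essentially the same approach as the paper's own proof: apply $\mathbf{S}^*_{fin}(\mathscr{O},\mathscr{O})$ to obtain the finite selections $\{U_n^1,\dots,U_n^{k_n}\}$, define $\mathcal{W}_n^i$ and $\mathcal{V}_n^i$ exactly as you do, prove the claim $\bigcup\mathcal{V}_n^i=St(U_n^i,\mathcal{U}_n)$ via the two inclusions, and conclude the covering condition. The only cosmetic difference is that the paper names the finite selection $\mathcal{V}_n$ before relabelling it as $\{U_n^1,\dots,U_n^{k_n}\}$, whereas you call it $\mathcal{U}_n^f$ from the outset.
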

\begin{proof}
Let $\{\mathcal{U}_n:n\in\omega\}$ be a sequence of open covers of $X$. Since $X$ satisfies $\mathbf{S}^*_{fin}(\mathscr{O},\mathscr{O})$, there are $\mathcal{V}_n\in[\mathcal{U}_n]^{<\omega}$ ($n\in\omega$) such that $\bigcup_{n\in\omega}\{St(V, \mathcal{U}_n):V\in\mathcal{V}_n\}$ is an open cover of $X$. For each $n\in\omega$, we put $\mathcal{V}_n=\{U_n^1,\ldots, U_n^{k_n}\}$. Now, for each $n\in\omega$ and each $i\in\{1,\dots, k_n\}$, let $\mathcal{W}_n^i=\{U\in\mathcal{U}_n:U\cap U_n^i\neq\emptyset\}$ and we define $\mathcal{V}_n^i=\{\bigcup\mathcal{V}:\mathcal{V}\in[\mathcal{W}_n^i]^{<\omega}\}$, for each $n\in\omega$ and each $i\in\{1,\dots, k_n\}$. Note that, for each $n\in\omega$ and each $i\in\{1,\dots, k_n\}$, $\mathcal{V}_n^i=\{\bigcup\mathcal{V}:\mathcal{V}\in[\mathcal{U}_n]^{<\omega} \text{ and } U_n^i\cap V\neq\emptyset \text{, for each }V\in\mathcal{V}\}$.

\emph{Claim.} For each $n\in\omega$ and each $i\in\{1,\dots, k_n\}$, $\bigcup\mathcal{V}_n^i=St(U_n^i, \mathcal{U}_n)$.

Fix $n\in\omega$ and $i\in\{1,\dots, k_n\}$. If $x\in \bigcup\mathcal{V}_n^i$, then there exists $\mathcal{V}_0\in[\mathcal{U}_n]^{<\omega}$ so that $U_n^i\cap V\neq\emptyset$, for each $V\in\mathcal{V}_0$, and $x\in \bigcup\mathcal{V}_0$. Then, there is $V_0\in\mathcal{V}_0\subseteq\mathcal{U}_n$ such that $x\in V_0$ and $U_n^i\cap V_0\neq\emptyset$. Thus, $x\in St(U_n^i, \mathcal{U}_n)$. Now, if $x\in St(U_n^i, \mathcal{U}_n)$, then there exists $U_0\in\mathcal{U}_n$ such that $U_0\cap U_n^i\neq\emptyset$ and $x\in U_0$. Let $\mathcal{V}=\{U_0\}\in[\mathcal{U}_n]^{<\omega}$ and it follows that $\bigcup\mathcal{V}\in \mathcal{V}_n^i$ with $x\in\bigcup\mathcal{V}$. Thus, $x\in\bigcup\mathcal{V}_n^i$.

Let us prove that $\bigcup\{\mathcal{V}_n^i:n\in\omega, i\in\{1,\dots, k_n\}\}$ is an open cover of $X$. Let $x\in X$. Since the collection $\bigcup_{n\in\omega}\{St(V, \mathcal{U}_n):V\in\mathcal{V}_n\}$ is an open cover of $X$, there exists $n_0\in\omega$ and $V_0\in\mathcal{V}_{n_0}$ such that $x\in St(V_0, \mathcal{U}_{n_0})$. Note that $V_0=U_{n_0}^{i_0}$ for some $i_0\in\{1,\dots, k_{n_0}\}$. Then, by the Claim, $x\in\bigcup\mathcal{V}_{n_0}^{i_0}$ and hence, $x\in\bigcup\mathcal{V}_0$ for some $\bigcup\mathcal{V}_0\in\mathcal{V}_{n_0}^{i_0}$. Thus, $\bigcup\{\mathcal{V}_n^i:n\in\omega, i\in\{1,\dots, k_n\}\}$ is an open cover of $X$. Hence, $X$ satisfies $\mathbf{CS}_{fin}(\mathscr{O},\mathscr{O})$.
\end{proof}

From Propositions \ref{CSfinimpliesS*fin} and \ref{S*finimpliesCSfin}, we obtain the following theorem.

\begin{theorem}\label{Menger equivalence}
Let $X$ be a topological space. Then $X$ is star-Menger if and only if $X$ satisfies the principle $\mathbf{CS}_{fin}(\mathscr{O},\mathscr{O})$.
\end{theorem}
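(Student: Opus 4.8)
The plan is to assemble the equivalence directly from the two preceding propositions together with the definition of the star-Menger property. Recall from the preliminaries that, by definition, a topological space $X$ is star-Menger precisely when it satisfies $\mathbf{S}^*_{fin}(\mathscr{O},\mathscr{O})$. So the theorem is really the assertion that $\mathbf{S}^*_{fin}(\mathscr{O},\mathscr{O})$ and $\mathbf{CS}_{fin}(\mathscr{O},\mathscr{O})$ are equivalent, and each implication has already been established.

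First I would handle the forward direction: assume $X$ satisfies $\mathbf{CS}_{fin}(\mathscr{O},\mathscr{O})$. By Proposition~\ref{CSfinimpliesS*fin}, $X$ then satisfies $\mathbf{S}^*_{fin}(\mathscr{O},\mathscr{O})$, which is exactly the statement that $X$ is star-Menger. Then I would handle the converse: assume $X$ is star-Menger, i.e. $X$ satisfies $\mathbf{S}^*_{fin}(\mathscr{O},\mathscr{O})$. By Proposition~\ref{S*finimpliesCSfin}, $X$ satisfies $\mathbf{CS}_{fin}(\mathscr{O},\mathscr{O})$. Combining the two implications gives the biconditional, completing the proof.

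There is no real obstacle here: the entire content of the theorem lies in Propositions~\ref{CSfinimpliesS*fin} and~\ref{S*finimpliesCSfin} (the latter being the new contribution, via the Claim that $\bigcup\mathcal{V}_n^i = St(U_n^i,\mathcal{U}_n)$ which converts the finite-union-of-stars cover produced by $\mathbf{S}^*_{fin}$ into a cover witnessing $\mathbf{CS}_{fin}$). The only point worth stating explicitly in the write-up is the identification of "star-Menger" with "$\mathbf{S}^*_{fin}(\mathscr{O},\mathscr{O})$", so that the reader sees why chaining the two propositions yields the claimed characterization. Accordingly, the proof will be two short sentences, one for each direction, citing the respective proposition.
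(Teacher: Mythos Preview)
Your proposal is correct and matches the paper's approach exactly: the paper simply states that the theorem follows from Propositions~\ref{CSfinimpliesS*fin} and~\ref{S*finimpliesCSfin}, which is precisely what you do (together with the observation that star-Menger is by definition $\mathbf{S}^*_{fin}(\mathscr{O},\mathscr{O})$). The only cosmetic point is that you label the direction $\mathbf{CS}_{fin}(\mathscr{O},\mathscr{O})\Rightarrow$ star-Menger as the ``forward'' one, whereas the theorem as stated has star-Menger on the left of the biconditional; this is immaterial.
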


Another question, posed in \cite{CRT}, that involves the principle $\mathbf{CS}_{fin}(\mathscr{O},\mathscr{O})$ is the following and it essentially asks about the relationship with the strongly star-Menger property.

\begin{question}[{\cite[Question 4.14 (1)]{CRT}}]\label{Question 4.14(1)}
Let $(X, \tau)$ be a topological space and let $\mathscr{O}$ be the collection of open covers of $X$. Does the principle $\mathbf{CS}_{fin}(\mathscr{O},\mathscr{O})$ implies $\mathbf{SS}^*_{fin}(\mathscr{O},\mathscr{O})$?
\end{question}

Observe that, by Theorem \ref{Menger equivalence}, Question \ref{Question 4.14(1)} asks if the star selection principle $\mathbf{S}^*_{fin}(\mathscr{O},\mathscr{O})$ implies the star selection principle $\mathbf{SS}^*_{fin}(\mathscr{O},\mathscr{O})$. In other words, Question \ref{Question 4.14(1)} asks if the star-Menger property implies the strongly star-Menger property. The space mentioned in Example \ref{Example} is an example of a star-Menger space that is not strongly star-Menger. Hence, Question \ref{Question 4.8(1)} has a negative answer.\\

Another question from \cite{CRT} that is similar to the above but posed for arbitrary collections of families is the following.

\begin{question}[{\cite[Question 4.11]{CRT}}]\label{Question 4.11}
Let $X$ be a set and $\mathscr{A}$ and $\mathscr{B}$ collections of families of subsets of $X$. Does the principle $\mathbf{CS}_{fin}(\mathscr{A},\mathscr{B})$ implies the principle $\mathbf{SS}_{fin}^*(\mathscr{A},\mathscr{B})$ (or $\mathbf{S}_{fin}^*(\mathscr{A},\mathscr{B})$)?
\end{question}

Again, in the same paper, the authors showed that if one consider a topological space $X$ and $\mathscr{A}$ and $\mathscr{B}$ being the collection $\mathscr{O}$ of all open covers of $X$, then the principle $\mathbf{CS}_{fin}(\mathscr{O},\mathscr{O})$ implies the principle $\mathbf{S}_{fin}^*(\mathscr{O},\mathscr{O})$ (see Theorem 4.12 in \cite{CRT}). However, this question has also a negative answer for the general case as the following example shows.\\

\begin{example}\label{exampleMengercase}
There exist a set $X$ and collections $\mathscr{A}$ and $\mathscr{B}$ of families of subsets of $X$ such that $X$ satisfies the principle $\mathbf{CS}_{fin}(\mathscr{A},\mathscr{B})$ but neither $X$ satisfies the principle $\mathbf{SS}_{fin}^*(\mathscr{A},\mathscr{B})$ nor $\mathbf{S}_{fin}^*(\mathscr{A},\mathscr{B})$.
\end{example}

\begin{proof}
Again, we consider $X=\omega$ and $\mathscr{A} = \mathscr{B} = \mathcal{P}([X]^{<\omega}\setminus\{\emptyset\})$. By using same observation pointed out in the first part of the proof of Example \ref{exampleRothbergercase}, we easily get that $X$ satisfies the following (stronger) property:\\
For every sequence $\{\mathcal{U}_n:n\in\omega\}$ of elements in $\mathscr{A}$ and for every sequence $\{\mathcal{U}_n^f:n\in\omega\}$ with $\mathcal{U}_n^f=\{U_n^1,\ldots, U_n^{k_n}\}\in[\mathcal{U}_n]^{<\omega}$ ($n\in\omega$), $\bigcup\{\mathcal{V}_n^i:n\in\omega \text{ and }  i=1,\dots,k_n \}\in \mathscr{B}$, where $\mathcal{V}_n^i=\{\bigcup\mathcal{V}:\mathcal{V}\in[\mathcal{U}_n]^{<\omega} \text{ and } U_n^i\cap V\neq\emptyset \text{, for each }V\in\mathcal{V}\}$ (for each $n\in\omega$ and each $i\in\{1,\dots, k_n\}$).

Therefore, $X$ satisfies $\mathbf{CS}_{fin}(\mathscr{A},\mathscr{B})$.\\

On the other hand, we consider $\mathcal{U}=\{\{0,\ldots, m\}: m\in\omega\}$. Define, for each $n\in \omega$, $\mathcal{U}_n=\mathcal{U}$. Then, the sequence $\{\mathcal{U}_n:n\in\omega\}$ consists of elements in $\mathscr{A}$.\\

\emph{Claim 1.} For any sequence $\{\mathcal{V}_n:n\in\omega\}$ with $\mathcal{V}_n\in[\mathcal{U}_n]^{<\omega}$ ($n\in\omega$), $\bigcup_{n\in\omega}\{St(V,\mathcal{U}_n):V\in\mathcal{V}_n\}\notin\mathscr{B}$.

To see this fact, it is enough to note that if $V$ is any element of $[X]^{<\omega}\setminus\{\emptyset\}$, then $St(V,\mathcal{U})=\omega$. Thus, if $\{\mathcal{V}_n:n\in\omega\}$ is any sequence such that, for each $n\in\omega$, $\mathcal{V}_n\in[\mathcal{U}_n]^{<\omega}$, then $St(V,\mathcal{U}_n)\notin[X]^{<\omega}$ for any $V\in\mathcal{V}_n$. Therefore, $\bigcup_{n\in\omega}\{St(V,\mathcal{U}_n):V\in\mathcal{V}_n\}\notin\mathscr{B}$.

Hence, $X$ does not satisfy $\mathbf{S}_{fin}^*(\mathscr{A},\mathscr{B})$.\\

\emph{Claim 2.} For any sequence $\{F_n:n\in\omega\}$ of finite subsets of $X$, $\{St(F_n,\mathcal{U}_n):n\in\omega\}\notin\mathscr{B}$.

Let $\{F_n:n\in\omega\}$ be any sequence of finite subsets of $X$. Note that if some $F_n$ is empty, then $St(F_n,\mathcal{U}_n)=\emptyset$ which is not an element in $[X]^{<\omega}\setminus\{\emptyset\}$. Thus, $\{St(F_n,\mathcal{U}_n):n\in\omega\}\notin\mathscr{B}$ in this case. Now, assume that, for each $n\in\omega$, $F_n\neq\emptyset$. Again, $St(F_n,\mathcal{U}_n)=\omega$ (for each $n\in\omega$), which is not an element in $[X]^{<\omega}\setminus\{\emptyset\}$. Thus, in this case we also have $\{St(F_n,\mathcal{U}_n):n\in\omega\}\notin\mathscr{B}$.

Hence, $X$ does not satisfy $\mathbf{SS}_{fin}^*(\mathscr{A},\mathscr{B})$.

\end{proof}

\section{Collections defined by refinements and complements}

The Question \ref{Question 4.5} ask for some relationship between principles $\mathbf{CS}_1(\mathscr{A},\mathscr{B})$ and $\mathbf{S}_1^*(\mathscr{A},\mathscr{B})$ for arbitrarily collections of families $\mathscr{A}$ and $\mathscr{B}$. Although the equivalence between those principles is true when we take $\mathscr{A}$ and $\mathscr{B}$ to be the collection $\mathscr{O}$ of all open covers of a given space $X$(see Theorem \ref{Rothberger equivalence}), the Example \ref{exampleRothbergercase} shows that in general it is not true. However, we can define some collections of families of subsets of a given set $X$ so that we get some implications between these principles.

Recall that given two collections $\mathcal{A}$ and $\mathcal{B}$ of sets, we say that $\mathcal{A}$ refines $\mathcal{B}$, denoted by $\mathcal{A} \prec \mathcal{B}$, if for every $A\in\mathcal{A}$ there exists $B\in\mathcal{B}$ such that $A\subseteq B$. By using the notion of refinement,  we introduce the following terminology that allow us to state implications between selection principles for the Rothberger case and Menger case, respectively.

\begin{notation}
Given a collection $\mathscr{B}$ of families of subsets of a set $X$, we denote by 
$$\mathscr{R}_\mathscr{B}^-=\{\mathcal{R}: \text{ there exists }\mathcal{B}\in\mathscr{B} \text{ such that } \mathcal{R}\prec\mathcal{B}\}$$
and
$$\mathscr{R}_\mathscr{B}^+=\{\mathcal{R}: \text{ there exists }\mathcal{B}\in\mathscr{B} \text{ such that } \mathcal{B}\prec\mathcal{R}\}.$$
\end{notation}

\begin{theorem}\label{S1 implies CS1}
Let $X$ be a set and $\mathscr{A}$ and $\mathscr{B}$ collections of families of subsets of $X$. If $X$ satisfies the principle $\mathbf{S}_1^*(\mathscr{A},\mathscr{B})$, then $X$ satisfies the principle $\mathbf{CS}_1(\mathscr{A},\mathscr{R}_\mathscr{B}^-)$.
\end{theorem}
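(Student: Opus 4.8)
The plan is to mimic the argument of Proposition~\ref{S*1impliesCS1} almost verbatim, replacing the single open cover $\mathscr{O}$ by the abstract collections $\mathscr{A}$, $\mathscr{B}$ and tracking where the conclusion "$\{St(U_n,\mathcal{A}_n):n\in\omega\}\in\mathscr{B}$" has to be weakened to "$\bigcup\{\mathcal{V}_n:n\in\omega\}\in\mathscr{R}_\mathscr{B}^-$". Concretely, given a sequence $\{\mathcal{U}_n:n\in\omega\}$ of elements of $\mathscr{A}$, apply $\mathbf{S}_1^*(\mathscr{A},\mathscr{B})$ to obtain $U_n\in\mathcal{U}_n$ with $\{St(U_n,\mathcal{U}_n):n\in\omega\}\in\mathscr{B}$. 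For each $n$ set $\mathcal{V}_n=\{\bigcup\mathcal{V}:\mathcal{V}\in[\mathcal{U}_n]^{<\omega}\text{ and }U_n\cap V\neq\emptyset\text{ for each }V\in\mathcal{V}\}$, exactly the collection appearing in the definition of $\mathbf{CS}_1$. Then I must show $\mathcal{R}:=\bigcup\{\mathcal{V}_n:n\in\omega\}$ refines the witness $\mathcal{B}:=\{St(U_n,\mathcal{U}_n):n\in\omega\}\in\mathscr{B}$, which by the Notation places $\mathcal{R}\in\mathscr{R}_\mathscr{B}^-$, as required.

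The key step is the refinement $\mathcal{R}\prec\mathcal{B}$. This is essentially one half of the Claim in Proposition~\ref{S*1impliesCS1}: if $W\in\mathcal{V}_n$, say $W=\bigcup\mathcal{V}$ with $\mathcal{V}\in[\mathcal{U}_n]^{<\omega}$ and $U_n\cap V\neq\emptyset$ for every $V\in\mathcal{V}$, then every $V\in\mathcal{V}$ satisfies $V\subseteq St(U_n,\mathcal{U}_n)$ (because $V\in\mathcal{U}_n$ meets $U_n$), hence $W=\bigcup\mathcal{V}\subseteq St(U_n,\mathcal{U}_n)\in\mathcal{B}$. Thus every member of $\mathcal{R}$ is contained in a member of $\mathcal{B}$, i.e.\ $\mathcal{R}\prec\mathcal{B}$. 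One small point to be careful about: the definition of $\mathcal{V}_n$ in $\mathbf{CS}_1$ ranges over \emph{all} finite subfamilies of $\mathcal{U}_n$ whose members meet $U_n$ (not over a fixed restricted family $\mathcal{W}_n$ as in the earlier proof), but this does not affect the argument — the witnessed bound $V\subseteq St(U_n,\mathcal{U}_n)$ holds for every such $V$ regardless.

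I do not expect a genuine obstacle here; the only thing to notice is that in the abstract setting one gets a \emph{refinement} of the $\mathbf{S}_1^*$-witness rather than the witness itself on the nose (since $\bigcup\mathcal{V}_n$ need not equal $St(U_n,\mathcal{U}_n)$ when $\mathcal{U}_n$ is not closed under finite unions, and more importantly $\mathcal{V}_n$ may contain members strictly smaller than $St(U_n,\mathcal{U}_n)$), which is exactly why the target collection is $\mathscr{R}_\mathscr{B}^-$ and not $\mathscr{B}$. So the "main obstacle" is really just bookkeeping: verifying that the collection produced by the $\mathbf{CS}_1$ recipe, while possibly coarser or finer than the starred one, still sits inside $\mathscr{R}_\mathscr{B}^-$. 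Assembling these observations gives the implication $\mathbf{S}_1^*(\mathscr{A},\mathscr{B})\Rightarrow\mathbf{CS}_1(\mathscr{A},\mathscr{R}_\mathscr{B}^-)$.
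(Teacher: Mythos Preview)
Your proposal is correct and follows essentially the same approach as the paper: apply $\mathbf{S}_1^*(\mathscr{A},\mathscr{B})$ to obtain the sequence $\{U_n\}$, form the collections $\mathcal{V}_n$ exactly as in the definition of $\mathbf{CS}_1$, and verify that $\bigcup_n\mathcal{V}_n\prec\{St(U_n,\mathcal{U}_n):n\in\omega\}$ via the observation that each $V\in\mathcal{V}$ with $V\cap U_n\neq\emptyset$ satisfies $V\subseteq St(U_n,\mathcal{U}_n)$. The paper's proof is a bit terser but the argument is identical.
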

\begin{proof}
Let $\{\mathcal{U}_n:n\in\omega\}$ be a sequence of elements in $\mathscr{A}$. We take a sequence $\{U_n:n\in\omega\}$ such that $U_n\in\mathcal{U}_n$ $(n\in\omega)$ and $\{St(U_n, \mathcal{U}_n):n\in\omega\}\in\mathscr{B}$. For all $n\in\omega$, let $\mathcal{V}_n=\{\bigcup\mathcal{V}:\mathcal{V}\in[\mathcal{U}_n]^{<\omega} \text{ and } U_n\cap V\neq\emptyset \text{ for all } V\in\mathcal{V}\}$. We claim that the collection $\bigcup_{n\in\omega}\mathcal{V}_n\in\mathscr{R}_\mathscr{B}^-$. For that end, it is enough to show that $\bigcup_{n\in\omega}\mathcal{V}_n\prec\{St(U_n, \mathcal{U}_n):n\in\omega\}$. Let $\bigcup\mathcal{V}_0\in\mathcal{V}_{n_0}$ for some $n_0\in\omega$. By definition of $\mathcal{V}_{n_0}$, we know that $U_{n_0}\cap V\neq\emptyset$ for each $V\in\mathcal{V}_0$ and $\mathcal{V}_0\in[\mathcal{U}_{n_0}]^{<\omega}$. From this fact, it follows that $\bigcup\mathcal{V}_0\subseteq St(U_{n_0}, \mathcal{U}_{n_0})$. Thus, $\bigcup_{n\in\omega}\mathcal{V}_n\prec\{St(U_n, \mathcal{U}_n):n\in\omega\}$ and hence, $\bigcup_{n\in\omega}\mathcal{V}_n\in\mathscr{R}_\mathscr{B}^-$. Therefore, $X$ satisfies $\mathbf{CS}_1(\mathscr{A},\mathscr{R}_\mathscr{B}^-)$.
\end{proof}

\begin{theorem}\label{CS1 implies S1}
Let $X$ be a set and $\mathscr{A}$ and $\mathscr{B}$ collections of families of subsets of $X$. If $X$ satisfies the principle $\mathbf{CS}_1(\mathscr{A},\mathscr{B})$, then $X$ satisfies the principle $\mathbf{S}_1^*(\mathscr{A},\mathscr{R}_\mathscr{B}^+)$.
\end{theorem}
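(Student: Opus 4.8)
The plan is to reverse the construction used in Theorem \ref{S1 implies CS1}: there we started from a star-cover witness and produced a $\mathbf{CS}_1$-type witness refining it; here we start from a $\mathbf{CS}_1$-type witness and must produce a star-cover witness that is refined by it, which is exactly what membership in $\mathscr{R}_\mathscr{B}^+$ asks for. First I would take an arbitrary sequence $\{\mathcal{U}_n:n\in\omega\}$ of elements of $\mathscr{A}$ and apply $\mathbf{CS}_1(\mathscr{A},\mathscr{B})$ to obtain a selection $\{U_n:n\in\omega\}$ with $U_n\in\mathcal{U}_n$ such that $\bigcup_{n\in\omega}\mathcal{V}_n\in\mathscr{B}$, where $\mathcal{V}_n=\{\bigcup\mathcal{V}:\mathcal{V}\in[\mathcal{U}_n]^{<\omega} \text{ and } U_n\cap V\neq\emptyset \text{ for each }V\in\mathcal{V}\}$. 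The same selection $\{U_n:n\in\omega\}$ is the candidate witness for $\mathbf{S}_1^*(\mathscr{A},\mathscr{R}_\mathscr{B}^+)$: I must check that $\{St(U_n,\mathcal{U}_n):n\in\omega\}\in\mathscr{R}_\mathscr{B}^+$.

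The key step is the refinement claim $\bigcup_{n\in\omega}\mathcal{V}_n\prec\{St(U_n,\mathcal{U}_n):n\in\omega\}$, which by the notation for $\mathscr{R}_\mathscr{B}^+$ (taking $\mathcal{B}=\bigcup_{n\in\omega}\mathcal{V}_n\in\mathscr{B}$ and $\mathcal{R}=\{St(U_n,\mathcal{U}_n):n\in\omega\}$) immediately gives $\{St(U_n,\mathcal{U}_n):n\in\omega\}\in\mathscr{R}_\mathscr{B}^+$. This refinement is precisely the fact already verified inside the proof of Theorem \ref{S1 implies CS1}: if $\bigcup\mathcal{V}_0\in\mathcal{V}_{n_0}$ for some $n_0\in\omega$, then $\mathcal{V}_0\in[\mathcal{U}_{n_0}]^{<\omega}$ and $U_{n_0}\cap V\neq\emptyset$ for every $V\in\mathcal{V}_0$, so each such $V$ contributes to $St(U_{n_0},\mathcal{U}_{n_0})$, whence $\bigcup\mathcal{V}_0\subseteq St(U_{n_0},\mathcal{U}_{n_0})$. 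Thus every member of $\bigcup_{n\in\omega}\mathcal{V}_n$ is contained in some member of $\{St(U_n,\mathcal{U}_n):n\in\omega\}$, establishing $\prec$.

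Putting these together: for the given sequence $\{\mathcal{U}_n:n\in\omega\}$ in $\mathscr{A}$ we have produced a selection $\{U_n:n\in\omega\}$ with $U_n\in\mathcal{U}_n$ and $\{St(U_n,\mathcal{U}_n):n\in\omega\}\in\mathscr{R}_\mathscr{B}^+$, which is exactly what $\mathbf{S}_1^*(\mathscr{A},\mathscr{R}_\mathscr{B}^+)$ demands. I do not expect any real obstacle here — the argument is essentially symmetric to Theorem \ref{S1 implies CS1}, the only subtlety being to keep straight that $\mathscr{R}_\mathscr{B}^-$ collects families refining something in $\mathscr{B}$ while $\mathscr{R}_\mathscr{B}^+$ collects families refined by something in $\mathscr{B}$, so here one wants the star-cover to sit "above" the $\mathbf{CS}_1$-witness rather than below it. A minor edge case worth a sentence is that one could in principle have $\mathcal{V}_n=\emptyset$ for some $n$ (if no finite subfamily meets $U_n$ appropriately), but this does not affect the refinement relation, since $\prec$ only quantifies over members that actually occur.
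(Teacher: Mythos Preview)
Your proof is correct and follows essentially the same approach as the paper's own proof: apply $\mathbf{CS}_1(\mathscr{A},\mathscr{B})$ to the given sequence, reuse the resulting selection $\{U_n\}$ as the witness for $\mathbf{S}_1^*$, and invoke the refinement $\bigcup_{n\in\omega}\mathcal{V}_n\prec\{St(U_n,\mathcal{U}_n):n\in\omega\}$ already established in Theorem~\ref{S1 implies CS1} to conclude $\{St(U_n,\mathcal{U}_n):n\in\omega\}\in\mathscr{R}_\mathscr{B}^+$. Your additional remark on the harmless edge case $\mathcal{V}_n=\emptyset$ is not in the paper but is a reasonable clarification.
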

\begin{proof}
Let $\{\mathcal{U}_n:n\in\omega\}$ be a sequence of elements in $\mathscr{A}$. Since $X$ satisfies $\mathbf{CS}_1(\mathscr{A},\mathscr{B})$, there exists a sequence $\{U_n:n\in\omega\}$ such that $U_n\in\mathcal{U}_n$ $(n\in\omega)$ and $\bigcup_{n\in\omega}\mathcal{V}_n\in\mathscr{B}$, where $\mathcal{V}_n=\{\bigcup\mathcal{V}:\mathcal{V}\in[\mathcal{U}_n]^{<\omega} \text{ and } U_n\cap V\neq\emptyset \text{ for all } V\in\mathcal{V}\}$ for all $n\in\omega$. Let us show that the collection $\{St(U_n, \mathcal{U}_n):n\in\omega\}\in\mathscr{R}_\mathscr{B}^+$. For this, it suffices to note that $\bigcup_{n\in\omega}\mathcal{V}_n\prec\{St(U_n, \mathcal{U}_n):n\in\omega\}$. But this fact is easily obtained as in the proof of Theorem \ref{S1 implies CS1}. Hence, $\{St(U_n, \mathcal{U}_n):n\in\omega\}\in\mathscr{R}_\mathscr{B}^+$. Therefore, $X$ satisfies $\mathbf{S}_1^*(\mathscr{A},\mathscr{R}_\mathscr{B}^+)$.
\end{proof}

With similar ideas as in the proof of Theorems \ref{S1 implies CS1} and \ref{CS1 implies S1}, one can prove the following results for the Menger-type selection principles.

\begin{theorem}\label{Sfin implies CSfin}
Let $X$ be a set and $\mathscr{A}$ and $\mathscr{B}$ collections of families of subsets of $X$. If $X$ satisfies the principle $\mathbf{S}_{fin}^*(\mathscr{A},\mathscr{B})$, then $X$ satisfies the principle $\mathbf{CS}_{fin}(\mathscr{A},\mathscr{R}_\mathscr{B}^-)$.
\end{theorem}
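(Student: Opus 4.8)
\emph{Proof proposal.} The argument parallels the proof of Theorem \ref{S1 implies CS1}, with the single-selection step replaced by a finite-selection step. The plan is as follows. Let $\{\mathcal{U}_n:n\in\omega\}$ be an arbitrary sequence of elements of $\mathscr{A}$. Applying $\mathbf{S}_{fin}^*(\mathscr{A},\mathscr{B})$, we obtain for each $n\in\omega$ a finite subset $\mathcal{U}_n^f=\{U_n^1,\dots,U_n^{k_n}\}$ of $\mathcal{U}_n$ such that $\bigcup_{n\in\omega}\{St(U_n^i,\mathcal{U}_n):i\in\{1,\dots,k_n\}\}\in\mathscr{B}$. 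This sequence $\{\mathcal{U}_n^f:n\in\omega\}$ is exactly the finite-selection witness demanded by $\mathbf{CS}_{fin}$; it remains to identify the associated collection $\bigcup\{\mathcal{V}_n^i:n\in\omega,\ i\in\{1,\dots,k_n\}\}$, where $\mathcal{V}_n^i=\{\bigcup\mathcal{V}:\mathcal{V}\in[\mathcal{U}_n]^{<\omega}\text{ and }U_n^i\cap V\neq\emptyset\text{ for each }V\in\mathcal{V}\}$, as a member of $\mathscr{R}_\mathscr{B}^-$.

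Next I would verify that $\bigcup\{\mathcal{V}_n^i:n\in\omega,\ i\in\{1,\dots,k_n\}\}\prec\{St(U_n^i,\mathcal{U}_n):n\in\omega,\ i\in\{1,\dots,k_n\}\}$. Indeed, fix $n_0\in\omega$, $i_0\in\{1,\dots,k_{n_0}\}$, and an element $\bigcup\mathcal{V}_0\in\mathcal{V}_{n_0}^{i_0}$, so that $\mathcal{V}_0\in[\mathcal{U}_{n_0}]^{<\omega}$ and $U_{n_0}^{i_0}\cap V\neq\emptyset$ for each $V\in\mathcal{V}_0$. Then every $V\in\mathcal{V}_0$ is a member of $\mathcal{U}_{n_0}$ meeting $U_{n_0}^{i_0}$, hence $V\subseteq St(U_{n_0}^{i_0},\mathcal{U}_{n_0})$, and therefore $\bigcup\mathcal{V}_0\subseteq St(U_{n_0}^{i_0},\mathcal{U}_{n_0})$. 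This is precisely the required refinement condition, with the set $St(U_{n_0}^{i_0},\mathcal{U}_{n_0})$ lying in the family $\{St(U_n^i,\mathcal{U}_n):n\in\omega,\ i\in\{1,\dots,k_n\}\}\in\mathscr{B}$.

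Since $\bigcup\{\mathcal{V}_n^i:n\in\omega,\ i\in\{1,\dots,k_n\}\}$ refines a member of $\mathscr{B}$, by the definition of $\mathscr{R}_\mathscr{B}^-$ it belongs to $\mathscr{R}_\mathscr{B}^-$; together with the fact that each $\mathcal{U}_n^f$ is a finite subset of $\mathcal{U}_n$, this establishes that $X$ satisfies $\mathbf{CS}_{fin}(\mathscr{A},\mathscr{R}_\mathscr{B}^-)$ and completes the proof.

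As for difficulty, there is essentially no obstacle here: the content is a transcription of the $\mathbf{S}_1^*$ argument into the finite-selection setting. The only points requiring a moment's care are purely bookkeeping — keeping the double index $(n,i)$ organized so that the union $\bigcup\{\mathcal{V}_n^i\}$ matches the shape required in Definition \ref{Menger type}, and noting that the degenerate choice $\mathcal{V}=\emptyset$ (which yields $\bigcup\mathcal{V}=\emptyset$) causes no trouble since $\emptyset\subseteq St(U_n^i,\mathcal{U}_n)$ trivially.
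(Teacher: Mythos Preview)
Your proposal is correct and follows precisely the route the paper intends: the paper does not write out a separate proof for Theorem~\ref{Sfin implies CSfin} but simply states that it is obtained ``with similar ideas as in the proof of Theorems~\ref{S1 implies CS1} and~\ref{CS1 implies S1}'', and your argument is exactly the double-indexed transcription of the proof of Theorem~\ref{S1 implies CS1} into the finite-selection setting. The refinement verification $\bigcup\mathcal{V}_0\subseteq St(U_{n_0}^{i_0},\mathcal{U}_{n_0})$ is the same key step, carried out identically.
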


\begin{theorem}\label{CSfin implies Sfin}
Let $X$ be a set and $\mathscr{A}$ and $\mathscr{B}$ collections of families of subsets of $X$. If $X$ satisfies the principle $\mathbf{CS}_{fin}(\mathscr{A},\mathscr{B})$, then $X$ satisfies the principle $\mathbf{S}_{fin}^*(\mathscr{A},\mathscr{R}_\mathscr{B}^+)$.
\end{theorem}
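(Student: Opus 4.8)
The plan is to imitate the proof of Theorem \ref{CS1 implies S1}, replacing the single selected set $U_n\in\mathcal{U}_n$ by the finite selection $\mathcal{U}_n^f=\{U_n^1,\dots,U_n^{k_n}\}$ furnished by $\mathbf{CS}_{fin}$. First I would fix an arbitrary sequence $\{\mathcal{U}_n:n\in\omega\}$ of elements of $\mathscr{A}$ and apply the hypothesis $\mathbf{CS}_{fin}(\mathscr{A},\mathscr{B})$ to it. This yields, for each $n\in\omega$, a finite set $\mathcal{U}_n^f=\{U_n^1,\dots,U_n^{k_n}\}\subseteq\mathcal{U}_n$ such that $\bigcup\{\mathcal{V}_n^i:n\in\omega,\ i\in\{1,\dots,k_n\}\}\in\mathscr{B}$, where $\mathcal{V}_n^i=\{\bigcup\mathcal{V}:\mathcal{V}\in[\mathcal{U}_n]^{<\omega}\text{ and }U_n^i\cap V\neq\emptyset\text{ for each }V\in\mathcal{V}\}$.

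Next I would take the sequence $\{\mathcal{B}_n:n\in\omega\}$ witnessing $\mathbf{S}_{fin}^*$ to be precisely $\mathcal{B}_n=\mathcal{U}_n^f$, which is a finite subset of $\mathcal{U}_n$ as required. With this choice the relevant star collection becomes $\bigcup_{n\in\omega}\{St(B,\mathcal{U}_n):B\in\mathcal{B}_n\}=\{St(U_n^i,\mathcal{U}_n):n\in\omega,\ i\in\{1,\dots,k_n\}\}$. The goal then reduces to showing that this collection belongs to $\mathscr{R}_\mathscr{B}^+$; by definition of $\mathscr{R}_\mathscr{B}^+$ it suffices to exhibit an element of $\mathscr{B}$ that refines it, and the natural candidate is exactly the element $\bigcup\{\mathcal{V}_n^i:n\in\omega,\ i\in\{1,\dots,k_n\}\}\in\mathscr{B}$ produced above.

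The key step, and the only place where anything has to be checked, is the refinement $\bigcup\{\mathcal{V}_n^i:n\in\omega,\ i\in\{1,\dots,k_n\}\}\prec\{St(U_n^i,\mathcal{U}_n):n\in\omega,\ i\in\{1,\dots,k_n\}\}$. This is the same containment used in Theorems \ref{S1 implies CS1} and \ref{CS1 implies S1}: given any $\bigcup\mathcal{V}_0\in\mathcal{V}_{n_0}^{i_0}$, the defining condition $U_{n_0}^{i_0}\cap V\neq\emptyset$ for each $V\in\mathcal{V}_0$ (together with $\mathcal{V}_0\in[\mathcal{U}_{n_0}]^{<\omega}$) forces every $V\in\mathcal{V}_0$ into $St(U_{n_0}^{i_0},\mathcal{U}_{n_0})$, hence $\bigcup\mathcal{V}_0\subseteq St(U_{n_0}^{i_0},\mathcal{U}_{n_0})$; the latter is a member of the star collection, so the refinement holds. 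Since the element $\bigcup\{\mathcal{V}_n^i:n\in\omega,\ i\in\{1,\dots,k_n\}\}$ of $\mathscr{B}$ refines the star collection, the star collection lies in $\mathscr{R}_\mathscr{B}^+$, giving $\mathbf{S}_{fin}^*(\mathscr{A},\mathscr{R}_\mathscr{B}^+)$.

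I do not expect a genuine obstacle here: the argument is purely combinatorial and set-theoretic, with no topology involved, and the only subtlety is bookkeeping the double index $(n,i)$ instead of the single index $n$ of the $\mathbf{CS}_1$/$\mathbf{S}_1^*$ case. One small point worth verifying explicitly is that taking $\mathcal{B}_n=\mathcal{U}_n^f$ really does make $\bigcup_{n\in\omega}\{St(B,\mathcal{U}_n):B\in\mathcal{B}_n\}$ coincide with $\{St(U_n^i,\mathcal{U}_n):n\in\omega,\ i\in\{1,\dots,k_n\}\}$, which is immediate from $\mathcal{U}_n^f=\{U_n^1,\dots,U_n^{k_n}\}$.
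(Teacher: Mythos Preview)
Your proposal is correct and follows precisely the approach the paper intends: the paper does not give an explicit proof of this theorem but merely states that it is obtained ``with similar ideas as in the proof of Theorems \ref{S1 implies CS1} and \ref{CS1 implies S1},'' which is exactly the adaptation you carry out by replacing the single selection $U_n$ with the finite selection $\mathcal{U}_n^f=\{U_n^1,\dots,U_n^{k_n}\}$ and verifying the refinement $\bigcup\{\mathcal{V}_n^i\}\prec\{St(U_n^i,\mathcal{U}_n)\}$.
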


By doing slight modifications to the arguments used in the proofs of Theorems \ref{S1 implies CS1} and \ref{CS1 implies S1}, one can also prove the strong case for the Rothberger-type and Menger-type selection principles, respectively.

\begin{theorem}\label{strong case}
Let $X$ be a set and $\mathscr{A}$ and $\mathscr{B}$ collections of families of subsets of $X$. Then
\begin{enumerate}
    \item If $X$ satisfies the principle $\mathbf{SS}_1^*(\mathscr{A},\mathscr{B})$, then $X$ satisfies the principle $\mathbf{SCS}_1(\mathscr{A},\mathscr{R}_\mathscr{B}^-)$;
    \item If $X$ satisfies the principle $\mathbf{SCS}_1(\mathscr{A},\mathscr{B})$, then $X$ satisfies the principle $\mathbf{SS}_1^*(\mathscr{A},\mathscr{R}_\mathscr{B}^+)$;
    \item If $X$ satisfies the principle $\mathbf{SS}_{fin}^*(\mathscr{A},\mathscr{B})$, then $X$ satisfies the principle $\mathbf{SCS}_{fin}(\mathscr{A},\mathscr{R}_\mathscr{B}^-)$;
    \item If $X$ satisfies the principle $\mathbf{SCS}_{fin}(\mathscr{A},\mathscr{B})$, then $X$ satisfies the principle $\mathbf{SS}_{fin}^*(\mathscr{A},\mathscr{R}_\mathscr{B}^+)$.
\end{enumerate}
\end{theorem}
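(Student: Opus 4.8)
The plan is to prove all four implications in the same way, by transcribing the proofs of Theorems~\ref{S1 implies CS1} and~\ref{CS1 implies S1} with the selected sets $U_n$ replaced by the selected point, or finite set of points, and to reduce everything to one elementary observation about stars. Namely: if $x\in X$, $\mathcal{U}$ is a family of subsets of $X$, and $\mathcal{V}\in[\mathcal{U}]^{<\omega}$ satisfies $x\in V$ for every $V\in\mathcal{V}$, then each such $V$ meets $\{x\}$, so $V\subseteq St(x,\mathcal{U})$ and hence $\bigcup\mathcal{V}\subseteq St(x,\mathcal{U})$; moreover, if $x\in K\subseteq X$ then $St(x,\mathcal{U})\subseteq St(K,\mathcal{U})$. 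These two inclusions are the only facts needed, and they play exactly the role of the inclusion $\bigcup\mathcal{V}_0\subseteq St(U_{n_0},\mathcal{U}_{n_0})$ that drives the proof of Theorem~\ref{S1 implies CS1}.

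For item~(1), I would feed a sequence $\{\mathcal{U}_n:n\in\omega\}$ of elements of $\mathscr{A}$ to $\mathbf{SS}_1^*(\mathscr{A},\mathscr{B})$ to obtain points $x_n\in X$ with $\{St(x_n,\mathcal{U}_n):n\in\omega\}\in\mathscr{B}$, and set $\mathcal{V}_n=\{\bigcup\mathcal{V}:\mathcal{V}\in[\mathcal{U}_n]^{<\omega}\text{ and }x_n\in V\text{ for each }V\in\mathcal{V}\}$, i.e.\ the collection prescribed by $\mathbf{SCS}_1$ for the witness $(x_n)$. The observation above gives $\bigcup_{n\in\omega}\mathcal{V}_n\prec\{St(x_n,\mathcal{U}_n):n\in\omega\}\in\mathscr{B}$, so $\bigcup_{n\in\omega}\mathcal{V}_n\in\mathscr{R}_\mathscr{B}^-$, which is precisely what $\mathbf{SCS}_1(\mathscr{A},\mathscr{R}_\mathscr{B}^-)$ demands (with the same $x_n$). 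For item~(2), apply $\mathbf{SCS}_1(\mathscr{A},\mathscr{B})$ to get points $x_n$ with $\bigcup_{n\in\omega}\mathcal{V}_n\in\mathscr{B}$ for the same $\mathcal{V}_n$; since $\bigcup_{n\in\omega}\mathcal{V}_n\prec\{St(x_n,\mathcal{U}_n):n\in\omega\}$, the latter family lies in $\mathscr{R}_\mathscr{B}^+$, giving $\mathbf{SS}_1^*(\mathscr{A},\mathscr{R}_\mathscr{B}^+)$. Items~(3) and~(4) are the $fin$-analogues of~(1) and~(2) with the point $x_n$ replaced by a finite set $K_n=\{x_n^1,\dots,x_n^{k_n}\}\subseteq X$ and $\mathcal{V}_n$ replaced by the indexed families $\mathcal{V}_n^i=\{\bigcup\mathcal{V}:\mathcal{V}\in[\mathcal{U}_n]^{<\omega}\text{ and }x_n^i\in V\text{ for each }V\in\mathcal{V}\}$: here one uses both inclusions to get that every member of $\mathcal{V}_n^i$ is contained in $St(x_n^i,\mathcal{U}_n)\subseteq St(K_n,\mathcal{U}_n)$, hence $\bigcup\{\mathcal{V}_n^i:n\in\omega,\ i\in\{1,\dots,k_n\}\}\prec\{St(K_n,\mathcal{U}_n):n\in\omega\}$, and then argues with $\mathbf{SS}_{fin}^*$ and $\mathbf{SCS}_{fin}$ exactly as in~(1) and~(2).

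There is no real obstacle: the argument is purely formal and, as in Theorems~\ref{S1 implies CS1}--\ref{CSfin implies Sfin}, uses nothing about the particular collections $\mathscr{A},\mathscr{B}$ or the set $X$ beyond the two star-inclusions above. The only point to keep in mind in items~(3) and~(4) is the degenerate case in which a witnessing finite set $K_n$ is empty (that is, $k_n=0$): then $St(K_n,\mathcal{U}_n)=\emptyset$ and there is no index $i$, so that coordinate contributes nothing to either side and the refinement relations are untouched — an empty member on the larger side of a $\prec$ relation is harmless — so no separate case analysis is required, only a one-line remark.
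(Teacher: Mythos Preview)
Your proposal is correct and takes exactly the approach the paper intends: the paper itself does not spell out a proof of this theorem but simply states that it follows ``by doing slight modifications to the arguments used in the proofs of Theorems~\ref{S1 implies CS1} and~\ref{CS1 implies S1}'', and what you have written is precisely that modification --- replacing the selected sets $U_n$ by points $x_n$ (or finite sets $K_n$) and observing that the same refinement $\bigcup_n\mathcal{V}_n\prec\{St(\,\cdot\,,\mathcal{U}_n):n\in\omega\}$ holds. Your remark on the degenerate empty-$K_n$ case is a harmless bonus the paper does not mention.
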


Now, we show some equivalences among selection principles, defined in \cite{CRT}, by considering the collection obtained by taking complements.

\begin{theorem}\label{CS1 equivalent DS1}
Let $X$ be a set and $\mathscr{A}$ and $\mathscr{B}$ collections of families of subsets of $X$. Then $X$ satisfies the principle $\mathbf{CS}_1(\mathscr{A},\mathscr{B})$ if and only if $X$ satisfies the principle $\mathbf{DS}_1(\mathscr{A}^c,\mathscr{B}^c)$.
\end{theorem}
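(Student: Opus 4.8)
The plan is to exploit that complementation $U\mapsto U^{c}=X\setminus U$ is an involution of $\mathcal{P}(X)$, so it induces bijections $\mathcal{U}\leftrightarrow\mathcal{U}^{c}$ between families of subsets of $X$ and $\mathscr{A}\leftrightarrow\mathscr{A}^{c}$ between collections of such families, together with the De Morgan identities $\left(\bigcup\mathcal{V}\right)^{c}=\bigcap\mathcal{V}^{c}$ and $\left(\bigcap\mathcal{F}\right)^{c}=\bigcup\mathcal{F}^{c}$, valid for any family. The whole equivalence will reduce to the observation that, under these correspondences, the witnessing families $\mathcal{V}_{n}$ that appear in $\mathbf{CS}_{1}$ and $\mathcal{F}_{n}$ that appear in $\mathbf{DS}_{1}$ are exact complements of each other, so that $\bigcup_{n}\mathcal{F}_{n}=\left(\bigcup_{n}\mathcal{V}_{n}\right)^{c}$ lies in $\mathscr{B}^{c}$ precisely when $\bigcup_{n}\mathcal{V}_{n}$ lies in $\mathscr{B}$.

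For the forward implication I would assume $X$ satisfies $\mathbf{CS}_{1}(\mathscr{A},\mathscr{B})$ and take a sequence $\{\mathcal{D}_{n}:n\in\omega\}$ in $\mathscr{A}^{c}$, writing $\mathcal{D}_{n}=\mathcal{U}_{n}^{c}$ with $\mathcal{U}_{n}\in\mathscr{A}$. Applying $\mathbf{CS}_{1}(\mathscr{A},\mathscr{B})$ to $\{\mathcal{U}_{n}:n\in\omega\}$ yields $U_{n}\in\mathcal{U}_{n}$ with $\bigcup_{n}\mathcal{V}_{n}\in\mathscr{B}$, where $\mathcal{V}_{n}=\{\bigcup\mathcal{V}:\mathcal{V}\in[\mathcal{U}_{n}]^{<\omega}\text{ and }U_{n}\cap V\neq\emptyset\text{ for each }V\in\mathcal{V}\}$. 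Set $D_{n}:=U_{n}^{c}\in\mathcal{D}_{n}$. By De Morgan,
$$\mathcal{V}_{n}^{c}=\{\bigcap\mathcal{V}^{c}:\mathcal{V}\in[\mathcal{U}_{n}]^{<\omega}\text{ and }U_{n}\cap V\neq\emptyset\text{ for each }V\in\mathcal{V}\},$$
and reindexing along the bijection $\mathcal{V}\mapsto\mathcal{F}:=\mathcal{V}^{c}$ from $[\mathcal{U}_{n}]^{<\omega}$ onto $[\mathcal{D}_{n}]^{<\omega}$, while rewriting the side condition through $V=E^{c}$ and $U_{n}=D_{n}^{c}$, turns the right-hand side into $\{\bigcap\mathcal{F}:\mathcal{F}\in[\mathcal{D}_{n}]^{<\omega}\text{ and }D_{n}^{c}\cap E^{c}\neq\emptyset\text{ for each }E\in\mathcal{F}\}$, which is exactly $\mathcal{F}_{n}$. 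Hence $\bigcup_{n}\mathcal{F}_{n}=\bigcup_{n}\mathcal{V}_{n}^{c}=\left(\bigcup_{n}\mathcal{V}_{n}\right)^{c}\in\mathscr{B}^{c}$, so $\{D_{n}:n\in\omega\}$ witnesses $\mathbf{DS}_{1}(\mathscr{A}^{c},\mathscr{B}^{c})$ for $\{\mathcal{D}_{n}:n\in\omega\}$.

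For the converse I would note that the same argument, read with $\bigcup$ and $\bigcap$ interchanged and the two principles swapped (now invoking $\left(\bigcap\mathcal{F}\right)^{c}=\bigcup\mathcal{F}^{c}$), shows in identical fashion that $\mathbf{DS}_{1}(\mathscr{C},\mathscr{D})$ implies $\mathbf{CS}_{1}(\mathscr{C}^{c},\mathscr{D}^{c})$ for arbitrary collections $\mathscr{C},\mathscr{D}$ of families of subsets of $X$; applying this with $\mathscr{C}=\mathscr{A}^{c}$ and $\mathscr{D}=\mathscr{B}^{c}$ and using $(\mathscr{A}^{c})^{c}=\mathscr{A}$, $(\mathscr{B}^{c})^{c}=\mathscr{B}$ (again by involutivity of $U\mapsto U^{c}$) gives $\mathbf{CS}_{1}(\mathscr{A},\mathscr{B})$. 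The only point requiring care is the reindexing step: one must check that $\mathcal{V}\mapsto\mathcal{V}^{c}$ is a genuine bijection $[\mathcal{U}_{n}]^{<\omega}\to[\mathcal{D}_{n}]^{<\omega}$ (it is, since $U\mapsto U^{c}$ restricts to a bijection $\mathcal{U}_{n}\to\mathcal{D}_{n}$) and that the two side conditions match under it; the empty subset is harmless, since $\emptyset^{c}=X$, $\bigcup\emptyset=\emptyset$, $\bigcap\emptyset=X$, and both ``for each'' clauses are then vacuous. Beyond this bookkeeping the statement is a formal consequence of De Morgan's laws and the involutivity of complementation, so I do not anticipate a real obstacle.
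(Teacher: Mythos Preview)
Your proof is correct and follows essentially the same approach as the paper's: both directions rest on De Morgan's laws and the involutivity of complementation to identify $\mathcal{F}_n=\mathcal{V}_n^{c}$ and hence $\bigcup_n\mathcal{F}_n=(\bigcup_n\mathcal{V}_n)^{c}$. The only cosmetic difference is that you derive the converse by invoking the general implication $\mathbf{DS}_1(\mathscr{C},\mathscr{D})\Rightarrow\mathbf{CS}_1(\mathscr{C}^{c},\mathscr{D}^{c})$ together with $(\mathscr{A}^{c})^{c}=\mathscr{A}$, whereas the paper writes out that direction explicitly.
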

\begin{proof}
$[\rightarrow]$ Assume $X$ satisfies $\mathbf{CS}_1(\mathscr{A},\mathscr{B})$ and let $\{\mathcal{D}_n:n\in\omega\}$ be a sequence of elements in $\mathscr{A}^c$. We put, for each $n\in\omega$, $\mathcal{U}_n=\mathcal{D}_n^c$. Then, the sequence $\{\mathcal{U}_n:n\in\omega\}$ consists of elements in $\mathscr{A}$. Thus, there exists a sequence $\{U_n:n\in\omega\}$ such that $U_n\in\mathcal{U}_n$ $(n\in\omega)$ and $\bigcup_{n\in\omega}\mathcal{V}_n\in\mathscr{B}$ where $\mathcal{V}_n=\{\bigcup\mathcal{V}:\mathcal{V}\in[\mathcal{U}_n]^{<\omega} \text{ and } U_n\cap V\neq\emptyset \text{ for all } V\in\mathcal{V}\}$ for all $n\in\omega$. For each $n\in\omega$, let $\mathcal{F}_n=\{(\bigcup\mathcal{V})^c:\mathcal{V}\in[\mathcal{U}_n]^{<\omega} \text{ and } U_n\cap V\neq\emptyset \text{ for all } V\in\mathcal{V}\}$. Note that we can rewrite these collections as $\mathcal{F}_n=\{\bigcap\mathcal{F}:\mathcal{F}\in[\mathcal{D}_n]^{<\omega} \text{ and } D_n^c\cap E^c\neq\emptyset \text{ for all } E\in\mathcal{F}\}$ where $D_n=U_n^c$ for all $n\in\omega$. Thus, each $D_n\in\mathcal{D}_n$. Furthermore, $\bigcup_{n\in\omega}\mathcal{F}_n\in\mathscr{B}^c$. Indeed, observe that $\bigcup_{n\in\omega}\mathcal{F}_n=\bigcup_{n\in\omega}(\mathcal{V}_n)^c=(\bigcup_{n\in\omega}\mathcal{V}_n)^c$ and, since $\bigcup_{n\in\omega}\mathcal{V}_n\in\mathscr{B}$, it follows that  $\bigcup_{n\in\omega}\mathcal{F}_n\in\mathscr{B}^c$.

Hence, $X$ satisfies $\mathbf{DS}_1(\mathscr{A}^c,\mathscr{B}^c)$.\\


$[\leftarrow]$ Assume $X$ satisfies $\mathbf{DS}_1(\mathscr{A}^c,\mathscr{B}^c)$ and let $\{\mathcal{U}_n:n\in\omega\}$ be a sequence of elements in $\mathscr{A}$. For each $n\in\omega$, let $\mathcal{D}_n=\mathcal{U}_n^c$. Then, the sequence $\{\mathcal{D}_n:n\in\omega\}$ consists of elements in $\mathscr{A}^c$. By applying the hypothesis, we get a sequence $\{D_n:n\in\omega\}$ such that $D_n\in\mathcal{D}_n$ $(n\in\omega)$ and $\bigcup_{n\in\omega}\mathcal{F}_n\in\mathscr{B}^c$ where $\mathcal{F}_n=\{\bigcap\mathcal{F}:\mathcal{F}\in[\mathcal{D}_n]^{<\omega} \text{ and } D_n^c\cap E^c\neq\emptyset \text{ for all } E\in\mathcal{F}\}$ for all $n\in\omega$. We define, for each $n\in\omega$, $U_n=D_n^c$. Since $D_n^c\in\mathcal{D}_n^c$ and $\mathcal{D}_n^c=\mathcal{U}_n$, then each $U_n\in\mathcal{U}_n$ ($n\in\omega$). In addition, we define for each $n\in\omega$, $\mathcal{V}_n=\{(\bigcap\mathcal{F})^c:\mathcal{F}\in[\mathcal{D}_n]^{<\omega} \text{ and } D_n^c\cap E^c\neq\emptyset \text{ for all } E\in\mathcal{F}\}$. Thus, we can rewrite these collections as $\mathcal{V}_n=\{\bigcup\mathcal{V}:\mathcal{V}\in[\mathcal{U}_n]^{<\omega} \text{ and } U_n\cap V\neq\emptyset \text{ for all } V\in\mathcal{V}\}$. It only remains to show that $\bigcup_{n\in\omega}\mathcal{V}_n\in\mathscr{B}$.
For that end, observe that each $\mathcal{V}_n=\mathcal{F}_n^c$ and then, $\bigcup_{n\in\omega}\mathcal{V}_n=\bigcup_{n\in\omega}(\mathcal{F}_n)^c=(\bigcup_{n\in\omega}\mathcal{F}_n)^c$ and, since $\bigcup_{n\in\omega}\mathcal{F}_n\in\mathscr{B}^c$, it follows that  $\bigcup_{n\in\omega}\mathcal{V}_n\in\mathscr{B}$.

Therefore, $X$ satisfies $\mathbf{CS}_1(\mathscr{A},\mathscr{B})$.
\end{proof}

Now, let us show the equivalence for strong version of Theorem \ref{CS1 equivalent DS1}.

\begin{theorem}\label{SCS1 equivalent SDS1}
Let $X$ be a set and $\mathscr{A}$ and $\mathscr{B}$ collections of families of subsets of $X$. Then $X$ satisfies the principle $\mathbf{SCS}_1(\mathscr{A},\mathscr{B})$ if and only if $X$ satisfies the principle $\mathbf{SDS}_1(\mathscr{A}^c,\mathscr{B}^c)$.
\end{theorem}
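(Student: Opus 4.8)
The plan is to mimic the proof of Theorem \ref{CS1 equivalent DS1} almost verbatim, the only structural change being that the selected sets $U_n$ (resp.\ $D_n$) are replaced throughout by selected points $x_n\in X$, and, crucially, \emph{the same sequence $\{x_n:n\in\omega\}$ witnesses both principles}. So there is nothing to transport on the ``selection side''; the entire content is a De Morgan bookkeeping that identifies the auxiliary families attached to a point.

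The key step I would isolate first is the following. Fix $n\in\omega$, let $\mathcal{U}_n$ be a family of subsets of $X$, put $\mathcal{D}_n=\mathcal{U}_n^c$, and fix $x_n\in X$. The map $\mathcal{V}\mapsto\mathcal{V}^c$ is a bijection from $[\mathcal{U}_n]^{<\omega}$ onto $[\mathcal{D}_n]^{<\omega}$; under it the condition ``$x_n\in V$ for each $V\in\mathcal{V}$'' turns into ``$x_n\in E^c$ for each $E\in\mathcal{V}^c$'' (each $E\in\mathcal{V}^c$ has the form $V^c$ with $V\in\mathcal{V}$, and $(V^c)^c=V$), while De Morgan gives $(\bigcup\mathcal{V})^c=\bigcap(\mathcal{V}^c)$. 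Hence, writing $\mathcal{V}_n=\{\bigcup\mathcal{V}:\mathcal{V}\in[\mathcal{U}_n]^{<\omega},\ x_n\in V\ \text{for each}\ V\in\mathcal{V}\}$ and $\mathcal{F}_n=\{\bigcap\mathcal{F}:\mathcal{F}\in[\mathcal{D}_n]^{<\omega},\ x_n\in E^c\ \text{for each}\ E\in\mathcal{F}\}$, one obtains $\mathcal{F}_n=\mathcal{V}_n^c$, and therefore $\bigcup_{n\in\omega}\mathcal{F}_n=\bigcup_{n\in\omega}\mathcal{V}_n^c=\left(\bigcup_{n\in\omega}\mathcal{V}_n\right)^c$.

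With this identity the two implications are immediate. For $[\rightarrow]$: assume $\mathbf{SCS}_1(\mathscr{A},\mathscr{B})$ and let $\{\mathcal{D}_n:n\in\omega\}$ be a sequence in $\mathscr{A}^c$; set $\mathcal{U}_n=\mathcal{D}_n^c$, which lies in $\mathscr{A}$ since $(-)^c$ is an involution on families and hence on collections of families (so $(\mathcal{C}^c)^c=\mathcal{C}$). Apply $\mathbf{SCS}_1(\mathscr{A},\mathscr{B})$ to $\{\mathcal{U}_n:n\in\omega\}$ to get points $\{x_n:n\in\omega\}$ with $\bigcup_{n\in\omega}\mathcal{V}_n\in\mathscr{B}$; then $\bigcup_{n\in\omega}\mathcal{F}_n=\left(\bigcup_{n\in\omega}\mathcal{V}_n\right)^c\in\mathscr{B}^c$, which is exactly what $\mathbf{SDS}_1(\mathscr{A}^c,\mathscr{B}^c)$ demands for $\{\mathcal{D}_n:n\in\omega\}$ with the points $\{x_n:n\in\omega\}$. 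For $[\leftarrow]$: symmetrically, given $\{\mathcal{U}_n:n\in\omega\}$ in $\mathscr{A}$, put $\mathcal{D}_n=\mathcal{U}_n^c\in\mathscr{A}^c$, apply $\mathbf{SDS}_1(\mathscr{A}^c,\mathscr{B}^c)$ to obtain $\{x_n:n\in\omega\}$ with $\bigcup_{n\in\omega}\mathcal{F}_n\in\mathscr{B}^c$, and use $\bigcup_{n\in\omega}\mathcal{V}_n=\left(\bigcup_{n\in\omega}\mathcal{F}_n\right)^c$ together with $(\mathscr{B}^c)^c=\mathscr{B}$ to conclude $\bigcup_{n\in\omega}\mathcal{V}_n\in\mathscr{B}$.

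I do not expect a genuine obstacle here: the whole argument is the involutivity of the complement operator plus De Morgan's law, just as in Theorem \ref{CS1 equivalent DS1}. The only points meriting a line of care are (i) that $\mathcal{V}\mapsto\mathcal{V}^c$ is honestly a bijection between the finite subfamilies, so that no element of $\mathcal{F}_n$ or $\mathcal{V}_n$ is lost or duplicated, and (ii) being explicit that $\mathcal{W}\in\mathscr{B}\iff\mathcal{W}^c\in\mathscr{B}^c$, which is again $(\mathscr{B}^c)^c=\mathscr{B}$ applied to an individual family; both are handled exactly as in the proof of Theorem \ref{CS1 equivalent DS1}.
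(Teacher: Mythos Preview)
Your proposal is correct and follows essentially the same approach as the paper's own proof: both directions pass to the complementary sequence, use the same selected points $\{x_n:n\in\omega\}$, identify $\mathcal{F}_n=\mathcal{V}_n^c$ via De Morgan, and conclude $\bigcup_{n}\mathcal{F}_n=(\bigcup_{n}\mathcal{V}_n)^c$. If anything, your isolation of the bijection $\mathcal{V}\mapsto\mathcal{V}^c$ between the finite subfamilies makes the ``rewriting'' step in the paper more explicit, but there is no substantive difference.
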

\begin{proof}
$[\rightarrow]$ Assume $X$ satisfies $\mathbf{SCS}_1(\mathscr{A},\mathscr{B})$ and let $\{\mathcal{D}_n:n\in\omega\}$ be a sequence of elements in $\mathscr{A}^c$. We put, for each $n\in\omega$, $\mathcal{U}_n=\mathcal{D}_n^c$. Then, the sequence $\{\mathcal{U}_n:n\in\omega\}\subseteq \mathscr{A}$. Thus, there exists a sequence $\{x_n:n\in\omega\}$ of points in $X$ such that $\bigcup_{n\in\omega}\mathcal{V}_n\in\mathscr{B}$ where $\mathcal{V}_n=\{\bigcup\mathcal{V}:\mathcal{V}\in[\mathcal{U}_n]^{<\omega} \text{ and } x_n\in V \text{ for all } V\in\mathcal{V}\}$ for all $n\in\omega$. For each $n\in\omega$, let $\mathcal{F}_n= \mathcal{V}_n^c$. It means that, for each $n\in\omega$, $\mathcal{F}_n=\{(\bigcup\mathcal{V})^c:\mathcal{V}\in[\mathcal{U}_n]^{<\omega} \text{ and } x_n\in V \text{ for all } V\in\mathcal{V}\}$. Rewriting these collections as $\mathcal{F}_n=\{\bigcap\mathcal{F}:\mathcal{F}\in[\mathcal{D}_n]^{<\omega} \text{ and } x_n\in E^c \text{ for all } E\in\mathcal{F}\}$. In addition, $\bigcup_{n\in\omega}\mathcal{F}_n\in\mathscr{B}^c$, since  $\bigcup_{n\in\omega}\mathcal{F}_n=\bigcup_{n\in\omega}\mathcal{V}_n^c=(\bigcup_{n\in\omega}\mathcal{V}_n)^c$ and $\bigcup_{n\in\omega}\mathcal{V}_n\in\mathscr{B}$.

Hence, $X$ satisfies $\mathbf{SDS}_1(\mathscr{A}^c,\mathscr{B}^c)$.\\


$[\leftarrow]$ Assume $X$ satisfies $\mathbf{SDS}_1(\mathscr{A}^c,\mathscr{B}^c)$ and let $\{\mathcal{U}_n:n\in\omega\}\subseteq\mathscr{A}$. For each $n\in\omega$, we put $\mathcal{D}_n=\mathcal{U}_n^c$. Then, $\{\mathcal{D}_n:n\in\omega\}\subseteq\mathscr{A}^c$. Thus, there exists a sequence $\{x_n:n\in\omega\}$ of points of $X$ such that $\bigcup_{n\in\omega}\mathcal{F}_n\in\mathscr{B}^c$ where $\mathcal{F}_n=\{\bigcap\mathcal{F}:\mathcal{F}\in[\mathcal{D}_n]^{<\omega} \text{ and } x_n\in E^c \text{ for all } E\in\mathcal{F}\}$ for all $n\in\omega$. For each $n\in\omega$, we put $\mathcal{V}_n= \mathcal{F}_n^c$. It means that, for each $n\in\omega$, $\mathcal{V}_n=\{(\bigcap\mathcal{F})^c:\mathcal{F}\in[\mathcal{D}_n]^{<\omega} \text{ and } x_n\in E^c \text{ for all } E\in\mathcal{F}\}$. Moreover, we can rewrite these collections as $\mathcal{V}_n=\{\bigcup\mathcal{V}:\mathcal{V}\in[\mathcal{U}_n]^{<\omega} \text{ and } x_n\in V \text{ for all } V\in\mathcal{V}\}$. Finally, observe that $\bigcup_{n\in\omega}\mathcal{V}_n=\bigcup_{n\in\omega}\mathcal{F}_n^c=(\bigcup_{n\in\omega}\mathcal{F}_n)^c$ with $\bigcup_{n\in\omega}\mathcal{F}_n\in\mathscr{B}^c$. Hence, $\bigcup_{n\in\omega}\mathcal{V}_n\in\mathscr{B}$. 

Therefore, $X$ satisfies $\mathbf{SCS}_1(\mathscr{A},\mathscr{B})$.
\end{proof}

With similar ideas as in Theorems \ref{CS1 equivalent DS1} and \ref{SCS1 equivalent SDS1} one can proves the respective equivalences for the Menger case.

\begin{theorem}\label{CSfin equivalent DSfin}
Let $X$ be a set and $\mathscr{A}$ and $\mathscr{B}$ collections of families of subsets of $X$. Then $X$ satisfies the principle $\mathbf{CS}_{fin}(\mathscr{A},\mathscr{B})$ if and only if $X$ satisfies the principle $\mathbf{DS}_{fin}(\mathscr{A}^c,\mathscr{B}^c)$.
\end{theorem}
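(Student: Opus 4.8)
The plan is to imitate the proof of Theorem \ref{CS1 equivalent DS1}, carrying along the extra finite index $i\in\{1,\dots,k_n\}$. The engine is the elementary duality between unions and intersections under complementation: for any family $\mathcal{V}$ of subsets of $X$ one has $\left(\bigcup\mathcal{V}\right)^c=\bigcap\{V^c:V\in\mathcal{V}\}$, and for fixed sets $U,V\subseteq X$ the condition $U\cap V\neq\emptyset$ is literally the same as $(U^c)^c\cap(V^c)^c\neq\emptyset$; so writing $D=U^c$ and $E=V^c$, the condition ``$U\cap V\neq\emptyset$ for each $V\in\mathcal{V}$'' becomes ``$D^c\cap E^c\neq\emptyset$ for each $E\in\mathcal{V}^c$''. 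Moreover complementation distributes over unions of families, and $\mathcal{V}\mapsto\mathcal{V}^c$ is a bijection between $[\mathcal{U}_n]^{<\omega}$ and $[\mathcal{U}_n^c]^{<\omega}$.

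For the forward direction, assume $X$ satisfies $\mathbf{CS}_{fin}(\mathscr{A},\mathscr{B})$ and let $\{\mathcal{D}_n:n\in\omega\}$ be a sequence of elements of $\mathscr{A}^c$. Put $\mathcal{U}_n=\mathcal{D}_n^c$, so that $\{\mathcal{U}_n:n\in\omega\}$ is a sequence in $\mathscr{A}$, and apply $\mathbf{CS}_{fin}$ to obtain finite sets $\mathcal{U}_n^f=\{U_n^1,\dots,U_n^{k_n}\}\subseteq\mathcal{U}_n$ with $\bigcup\{\mathcal{V}_n^i:n\in\omega,\ i\in\{1,\dots,k_n\}\}\in\mathscr{B}$, where $\mathcal{V}_n^i$ is as in Definition \ref{Menger type}. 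Set $D_n^i=(U_n^i)^c$, so $\mathcal{D}_n^f:=\{D_n^1,\dots,D_n^{k_n}\}\subseteq\mathcal{D}_n$. Using the duality above, $\mathcal{F}_n^i:=(\mathcal{V}_n^i)^c$ is exactly $\{\bigcap\mathcal{F}:\mathcal{F}\in[\mathcal{D}_n]^{<\omega}\text{ and }(D_n^i)^c\cap E^c\neq\emptyset\text{ for each }E\in\mathcal{F}\}$, i.e.\ the family prescribed by $\mathbf{DS}_{fin}$. Finally $\bigcup\{\mathcal{F}_n^i:n\in\omega,\ i\in\{1,\dots,k_n\}\}=\left(\bigcup\{\mathcal{V}_n^i:n\in\omega,\ i\in\{1,\dots,k_n\}\}\right)^c\in\mathscr{B}^c$, so $X$ satisfies $\mathbf{DS}_{fin}(\mathscr{A}^c,\mathscr{B}^c)$.

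The backward direction is the mirror image: start with $\{\mathcal{U}_n:n\in\omega\}\subseteq\mathscr{A}$, set $\mathcal{D}_n=\mathcal{U}_n^c\in\mathscr{A}^c$, apply $\mathbf{DS}_{fin}(\mathscr{A}^c,\mathscr{B}^c)$ to get finite sets $\mathcal{D}_n^f=\{D_n^1,\dots,D_n^{k_n}\}\subseteq\mathcal{D}_n$ with $\bigcup\{\mathcal{F}_n^i:n\in\omega,\ i\in\{1,\dots,k_n\}\}\in\mathscr{B}^c$, then put $U_n^i=(D_n^i)^c\in\mathcal{U}_n$, observe that $\mathcal{V}_n^i:=(\mathcal{F}_n^i)^c$ coincides with the family prescribed by $\mathbf{CS}_{fin}$, and conclude $\bigcup\{\mathcal{V}_n^i:n\in\omega,\ i\in\{1,\dots,k_n\}\}=\left(\bigcup\{\mathcal{F}_n^i:n\in\omega,\ i\in\{1,\dots,k_n\}\}\right)^c\in\mathscr{B}$, using $(\mathcal{U}^c)^c=\mathcal{U}$.

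There is no genuine obstacle here; the argument is pure bookkeeping. The only point requiring a little care is the translation of the two defining conditions on $\mathcal{V}_n^i$ (the finiteness condition $\mathcal{V}\in[\mathcal{U}_n]^{<\omega}$ and the intersection condition $U_n^i\cap V\neq\emptyset$) into the two defining conditions on $\mathcal{F}_n^i$, namely checking that the bijection $\mathcal{V}\mapsto\mathcal{V}^c$ carries $[\mathcal{U}_n]^{<\omega}$ onto $[\mathcal{D}_n]^{<\omega}$ and respects these conditions; once that is spelled out, the passage $\mathscr{B}\leftrightarrow\mathscr{B}^c$ is immediate from the distributivity of complementation over unions of families.
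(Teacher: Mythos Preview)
Your proof is correct and is exactly the approach the paper intends: the paper does not write out a separate proof for this theorem but simply says it follows ``with similar ideas as in Theorems \ref{CS1 equivalent DS1} and \ref{SCS1 equivalent SDS1}'', and you have carried out precisely that, tracking the additional finite index $i\in\{1,\dots,k_n\}$ through the complementation duality.
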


\begin{theorem}\label{SCSfin equivalent SDSfin}
Let $X$ be a set and $\mathscr{A}$ and $\mathscr{B}$ collections of families of subsets of $X$. Then $X$ satisfies the principle $\mathbf{SCS}_{fin}(\mathscr{A},\mathscr{B})$ if and only if $X$ satisfies the principle $\mathbf{SDS}_{fin}(\mathscr{A}^c,\mathscr{B}^c)$.
\end{theorem}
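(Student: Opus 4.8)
The plan is to imitate, essentially verbatim, the proof of Theorem~\ref{SCS1 equivalent SDS1}, the only difference being that at each level $n$ we now choose a finite set of points $\{x_n^1,\dots,x_n^{k_n}\}$ rather than a single point, and we carry out the complement bookkeeping separately for each index $i\in\{1,\dots,k_n\}$. The underlying mechanism is the same: complementation is an involution on $\mathcal{P}(X)$, so taking complements turns a witnessing sequence for one principle into a witnessing sequence for the other, with the \emph{same} chosen points.

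For the forward implication, suppose $X$ satisfies $\mathbf{SCS}_{fin}(\mathscr{A},\mathscr{B})$ and let $\{\mathcal{D}_n:n\in\omega\}$ be a sequence in $\mathscr{A}^c$. For each $n$ choose $\mathcal{U}_n\in\mathscr{A}$ with $\mathcal{U}_n^c=\mathcal{D}_n$; then $\{\mathcal{U}_n:n\in\omega\}$ is an instance of $\mathbf{SCS}_{fin}(\mathscr{A},\mathscr{B})$, which produces finite sets $U_n^f=\{x_n^1,\dots,x_n^{k_n}\}\subseteq X$ with $\bigcup\{\mathcal{V}_n^i:n\in\omega,\ i\in\{1,\dots,k_n\}\}\in\mathscr{B}$, where $\mathcal{V}_n^i=\{\bigcup\mathcal{V}:\mathcal{V}\in[\mathcal{U}_n]^{<\omega}\text{ and }x_n^i\in V\text{ for each }V\in\mathcal{V}\}$. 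Take the same points, $D_n^f:=U_n^f$. The key observation is that $\mathcal{V}\mapsto\mathcal{V}^c$ is a bijection from $[\mathcal{U}_n]^{<\omega}$ onto $[\mathcal{D}_n]^{<\omega}$ under which the constraint ``$x_n^i\in V$ for each $V\in\mathcal{V}$'' becomes ``$x_n^i\in E^c$ for each $E\in\mathcal{V}^c$'' (since $E=V^c$ gives $E^c=V$), and, by de Morgan, $(\bigcup\mathcal{V})^c=\bigcap\mathcal{V}^c$. Hence $\mathcal{F}_n^i=(\mathcal{V}_n^i)^c$ for every $n$ and every $i\in\{1,\dots,k_n\}$, so $\bigcup\{\mathcal{F}_n^i:n\in\omega,\ i\in\{1,\dots,k_n\}\}=\big(\bigcup\{\mathcal{V}_n^i:n\in\omega,\ i\in\{1,\dots,k_n\}\}\big)^c\in\mathscr{B}^c$, which is exactly what $\mathbf{SDS}_{fin}(\mathscr{A}^c,\mathscr{B}^c)$ demands.

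The reverse implication is symmetric: starting from $\mathbf{SDS}_{fin}(\mathscr{A}^c,\mathscr{B}^c)$ and a sequence $\{\mathcal{U}_n:n\in\omega\}$ in $\mathscr{A}$, set $\mathcal{D}_n:=\mathcal{U}_n^c\in\mathscr{A}^c$, apply the hypothesis to obtain points $\{x_n^1,\dots,x_n^{k_n}\}$ with $\bigcup\{\mathcal{F}_n^i\}\in\mathscr{B}^c$, and run the same de Morgan computation in reverse, using $(\mathcal{U}_n^c)^c=\mathcal{U}_n$ and $(\bigcap\mathcal{F})^c=\bigcup\mathcal{F}^c$, to rewrite each $\mathcal{F}_n^i$ as $(\mathcal{V}_n^i)^c$ with $\mathcal{V}_n^i=\{\bigcup\mathcal{V}:\mathcal{V}\in[\mathcal{U}_n]^{<\omega}\text{ and }x_n^i\in V\text{ for each }V\in\mathcal{V}\}$; then $\bigcup\{\mathcal{V}_n^i\}=\big(\bigcup\{\mathcal{F}_n^i\}\big)^c\in\mathscr{B}$.

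I do not anticipate a genuine obstacle here — the argument is pure complement/de Morgan bookkeeping. The one point that deserves care is verifying that $\mathcal{V}\mapsto\mathcal{V}^c$ really is a bijection $[\mathcal{U}_n]^{<\omega}\to[\mathcal{D}_n]^{<\omega}$ (it is, because $U\mapsto U^c$ is an involution on $\mathcal{P}(X)$, so no sets are lost or duplicated when passing from $\mathscr{A}$ to $\mathscr{A}^c$), which is precisely what legitimizes the identification $\mathcal{F}_n^i=(\mathcal{V}_n^i)^c$ used in both directions.
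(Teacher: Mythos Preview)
Your proposal is correct and follows exactly the approach the paper intends: the paper does not give an explicit proof of this theorem but simply states that it follows ``with similar ideas as in Theorems~\ref{CS1 equivalent DS1} and~\ref{SCS1 equivalent SDS1}'', and your argument is precisely that adaptation, replacing the single selected point $x_n$ by a finite tuple $x_n^1,\dots,x_n^{k_n}$ and carrying the de~Morgan/complement bookkeeping through each index $i$. Your explicit verification that $\mathcal{V}\mapsto\mathcal{V}^c$ is a bijection $[\mathcal{U}_n]^{<\omega}\to[\mathcal{D}_n]^{<\omega}$ is more careful than what the paper spells out in the analogous proofs.
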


\section*{Acknowledgements}

The author was supported for this research by the Consejo Nacional de Ciencia y Tecnolog\'ia CONACYT, M\'exico, Scholarship 769010.

\small
\baselineskip=5pt


\textsc{Department of Mathematics and Statistics, York University, 4700 Keele St. Toronto, ON M3J 1P3 Canada}\par\nopagebreak

\vspace{.1cm}
\textit{Email address}: J. Casas-de la Rosa: \texttt{olimpico.25@hotmail.com}


\begin{thebibliography}{99}

\bibitem{BCKM} M. Bonanzinga, F. Cammaroto, Lj.D.R. Ko\v{c}inac, M.V. Matveev, On weaker forms of Menger, Rothberger and Hurewicz properties, \textit{Mat. Vesnik}, 61, (2009), {13}--{23}.

\bibitem{BO} E. Borel, \emph{Sur la classification des ensembles de mesure nulle}, Bull. Soc. Math. de France. 47 (1919) 97-125.

\bibitem{CGS} J. Casas-de la Rosa, S. A. Garcia-Balan, P. J. Szeptycki, Some star and strongly star selection principles, \textit{Topology Appl}. 258 (2019), 572-587. \href{https://doi.org/10.1016/j.topol.2017.11.034}{https://doi.org/10.1016/j.topol.2017.11.034}

\bibitem{CMR} J. Casas-de la Rosa, I. Mart\'inez-Ruiz, A. Ram\'irez-P\'aramo, \emph{Star versions of the Menger property on hyperspaces}, Houston J. Math., to appear.

\bibitem{xiy} J. Casas-de la Rosa, I. Mart\'inez-Ruiz, A. Ram\'irez-P\'aramo, \emph{Star versions of the Rothberger property on hyperspaces}, Topol. Appl. 283 (2020), Art. ID 107396, 12 pages.

\bibitem{CRT2} R. Cruz-Castillo, A. Ram\'irez-P\'aramo, J.F. Tenorio \emph{Menger and Menger-type star selection principles for hit-and-miss topology}, Topol. Appl. 290 (2021), Art. ID 107574, 12 pages.

\bibitem{CRT} R. Cruz-Castillo, A. Ram\'irez-P\'aramo, J.F. Tenorio \emph{Star and strong star-type versions of Rothberger and Menger principles for hit-and-miss topology}, Topol. Appl. 300 (2021), Art. ID 107758, 11 pages.

\bibitem{DRT} J. D\'iaz-Reyes, A. Ram\'irez-P\'aramo, J.F. Tenorio \emph{Rothberger and Rothberger-type star selection principles on hyperspaces}, Topol. Appl. 287 (2021), Art. ID 107448, 9 pages.

\bibitem{E} R. Engelking, General Topology, \textit{Heldermann Verlag, Berlin, Sigma Series in Pure Mathematics} 6, (1989).

\bibitem{H} W. Hurewicz, \"{U}ber eine Verallgemeinerung des Borelschen Theorems, \textit{Math. Z}., 24 (1) (1925), {401}--{421}. \href{https://doi.org/10.1007/BF01216792}{https://doi.org/10.1007/BF01216792}

\bibitem{K} Lj.D.R. Ko\v{c}inac, Star-Menger and related spaces, \textit{Publ. Math. Debrecen}, 55 (1999), {421}--{431}.

\bibitem{K2} Lj.D.R. Ko\v{c}inac, Star-Menger and related spaces II, \textit{Filomat} 13 (1999), {129}--{140}.

\bibitem{K_survey} Lj.D.R. Ko\v{c}inac, Star selection principles: A survey, \textit{Khayyam J. Math.} 1 (2015), 1, {82}--{106}. \href{10.22034/KJM.2015.12289}{10.22034/KJM.2015.12289}

\bibitem{MEN} K. Menger, Einige \"Uberdeckungss\"atze der Punltmengenlehre, \textit{Sitzungsberichte Abt. 2a, Mathematik, Astronomie, Physik, Meteorologie und Mechanik} (Wiener Akademie, Wien) 133 (1924), {421}--{444}.

\bibitem{R} F. Rothberger, Eine Versch\"arfung der Eigenschaft C, \textit{Fund. Math}., 30 (1938), {50}--{55}. \href{https://www.impan.pl/pl/wydawnictwa/czasopisma-i-serie-wydawnicze/fundamenta-mathematicae/all/30/0/111967/eine-verscharfung-der-eigenschaft-c}{10.4064/fm-30-1-50-55}

\bibitem{MS1} M. Scheepers, Combinatorics of open covers I: Ramsey Theory, \textit{Topol. Appl}., 69 (1996), {31}--{62}. \href{https://doi.org/10.1016/0166-8641(95)00067-4}{https://doi.org/10.1016/0166-8641(95)00067-4}

\vspace{1cm}

\end{thebibliography}
\end{document}